\documentclass{article}
\usepackage[utf8]{inputenc} 
\usepackage[T1]{fontenc}
\usepackage{lipsum}
\usepackage{fullpage}
\title{Chern-Kuiper's inequalities}
\author{Diego N. Guajardo\footnote{Partially supported by CNPq and FAPERJ.}}
\usepackage{graphicx}
\usepackage{amsmath}
\usepackage{amssymb}
\usepackage{pst-node}

\usepackage{tikz-cd} 

\usepackage{bigints}
\usepackage{relsize}
\usepackage{amsthm}
\usepackage{hyperref}
\hypersetup{
    colorlinks=true,
    linkcolor=blue,
    filecolor=magenta,      
    urlcolor=cyan,
    citecolor=blue,
}

\usepackage{tikz-cd}

\makeatletter
\newcommand{\tpitchfork}{%
  \vbox{
    \baselineskip\z@skip
    \lineskip-.52ex
    \lineskiplimit\maxdimen
    \m@th
    \ialign{##\crcr\hidewidth\smash{$-$}\hidewidth\crcr$\pitchfork$\crcr}
  }%
}
\makeatother

\usepackage[sorting=nyt]{biblatex} 

\addbibresource{references.bib}
\DeclareFieldFormat[article, inbook]{title}{#1} 
\DeclareFieldFormat[article]{pages}{#1}
\DeclareFieldFormat{journaltitle}{#1\isdot}
\DeclareDelimFormat[bib]{nametitledelim}{: }
\renewbibmacro{in:}{}

\renewbibmacro*{issue+date}{%
  \ifboolexpr{test {\iffieldundef{year}}
              and test {\iffieldundef{issue}}}
    {}
    {\printtext[parens]{%
       \printfield{issue}%
       \setunit*{\addspace}%
       \usebibmacro{date}}}%
  \newunit}

\usepackage[shortlabels]{enumitem}

\newtheorem{thm}{Theorem}[section]

\newtheorem{lema}[thm]{Lemma}
\newtheorem{prop}[thm]{Proposition}
\newtheorem{cor}[thm]{Corollary}

\newtheorem{defn}[thm]{Definition}

\theoremstyle{remark}
\newtheorem{remark}[thm]{Remark}

\usepackage{ifthen} 
\usepackage{xifthen} 

\DeclareMathOperator{\rank}{rank}


\newcommand{\R}     {\mathbb{R}}

\newcommand{\de}[2][]{%
    \ifthenelse{\isempty{#1}}
        {\partial_{#2}}
        {\partial_{#2}^#1}
}

\newcommand{\D}{\Delta}

\newcommand{\map}[4]{#1:#2^{#3}\rightarrow#4}
\newcommand{\inner}[2]{\langle#1,#2\rangle}

\usepackage{xcolor}

\newcommand{\eref}[1]{(\ref{#1})}
\newcommand{\tref}[1]{Theorem \ref{#1}}
\newcommand{\pref}[1]{Proposition \ref{#1}}

\newcommand{\cref}[1]{Corollary \ref{#1}}

\newcommand{\lref}[1]{Lemma \ref{#1}}
\newcommand{\sref}[1]{Section \ref{#1}}

\setlength{\topmargin}{-0.5 in} \setlength{\textwidth}{6,25 in}
\setlength{\textheight}{9.0 in} \setlength{\oddsidemargin}{0 in}
\setlength{\evensidemargin}{0.0 in} \setlength{\marginparwidth}{0in} \setlength{\marginparsep}{0 in}

\providecommand{\keywords}[1]
{
  \small	
  \textbf{\textit{Keywords---}} #1
}

\begin{document}

\maketitle\begin{abstract}
    Given a Euclidean submanifold $\map{g}{M}{n}{\R^{n+p}}$, Chern and Kuiper provided inequalities between $\mu$ and $\nu_g$, the ranks of the nullity of $M^n$ and the relative nullity of $g$ respectively.
    Namely, they prove that 
    \begin{equation}\label{desigualdades de Chern-Kuiper's}
        \nu_g\leq\mu\leq\nu_g+p.
    \end{equation}
    In this work, we study the submanifolds with $\nu_g\neq\mu$.
    More precisely, we characterize locally the ones with $0\neq(\mu-\nu_g)\in\{p,p-1,p-2\}$ under the hypothesis of $\nu_g\leq n-p-1$.
\end{abstract}
\hspace{30pt}
\keywords{Chern-Kuiper's inequalities, Submanifold Theory.}
\section{Introduction}

There are two associated distributions to a submanifold $\map{g}{M}{n}{\R^{n+p}}$, the {\it nullity} $\Gamma\subseteq TM$ of the curvature tensor and the {\it relative nullity} $\D_g\subseteq TM$, i.e., the nullity of the second fundamental form $\alpha$ of $g$.
The relative nullity plays a fundamental role in many works of submanifold theory; for example \cite{DFaust}, \cite{DFTinter}, \cite{DJgbendings}, \cite{FFhyperbolen2}, and \cite{FGsingular}.
In many of them, this distribution coincides with the nullity, turning the problem into an intrinsic one; besides the ones already cited, see \cite{DFgenrigcodim2}, \cite{FZnegcurv1}, \cite{YoSC}.

We want to understand the submanifolds whose relative nullity does not coincide with the nullity.
There are two natural families of submanifolds with $\nu_g\neq\mu$. 
Firstly, if $M^n$ is flat and $g$ is not (an open subset of) an affine subspace then $\D_g\neq TM=\Gamma$.
Secondly, we have the compositions, that is, if $\map{\hat{g}}{M}{n}{\R^{n+\ell}}$ has nontrivial nullity and $\map{h}{U\subseteq\R}{n+\ell}{\R^{n+p}}$ is a flat submanifold with $\hat{g}(M^n)\subseteq U$ then generically $\map{g=h\circ\hat{g}}{M}{n}{\R^{n+p}}$ has less relative nullity, in particular $\D_g\neq\Gamma$.
Theorem 1 of \cite{FZnegcurv2} is an example of this phenomenon.

As a starting point, Gauss equation shows that $\D_g\subseteq\Gamma$.
Furthermore, Chern and Kuiper provided a complementary relation in \cite{ChKineq}. 
Namely, they showed that the ranks $\mu:=\dim(\Gamma)$ and $\nu_g:=\dim(\D_g)$ are related by \eref{desigualdades de Chern-Kuiper's}. 
Straightforward computations show that if $\nu_g=\mu-p$ then $M^n$ is flat and $\nu_g=\mu-p=n-p$. 
Proposition 7 of \cite{DFgenrigcodim2} analyzes the next case of the Chern-Kuiper's inequalities in a restricted situation.
It shows that if $\map{g}{M}{n}{\R^{n+2}}$ has $\nu_g=\mu-1=n-3$ then $g$ is locally a composition.
However, the authors' approach seems difficult to generalize.
The first result of this work extends that proposition, and the generalization is in two directions. 
We allow higher codimensions and do not impose a particular rank for the nullity.

\begin{thm}\label{teorema de composicion chern-kuiper mu=nu+p-1}
    Let $\map{g}{M}{n}{\R^{n+p}}$ be a submanifold with $p\geq 2$ and
    $$\nu_g=\mu-p+1\leq n-p-1.$$
    Then $g=G\circ\hat{g}$ is a composition, where $G:N^{n+1}\rightarrow\R^{n+p}$ is a flat submanifold and $\map{\hat{g}}{M}{n}{N^{n+1}}$ is an isometric embedding. 
    Moreover, $\D_{\hat{g}}=\Gamma$ and $\nu_G=(n+1)-(p-1)$.
\end{thm}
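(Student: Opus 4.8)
The plan is to produce, on a neighborhood where $\mu$ and $\nu_g$ are locally constant, a line subbundle $\ell\subset N_gM$ of the normal bundle along which $g$ will become a hypersurface of the flat $N^{n+1}$ to be constructed, and then to integrate. First I would redo the Chern--Kuiper step pointwise: for $X\in\Gamma$ the Gauss equation gives $\langle\alpha(X,W),\alpha(Y,Z)\rangle=\langle\alpha(X,Z),\alpha(Y,W)\rangle$ for all $Y,Z,W$, and if moreover $\alpha(X,\cdot)|_\Gamma=0$, then putting $W=X$ (so $\alpha(X,X)=0$) and afterwards $Z=Y$ yields $|\alpha(X,Y)|^2=0$, whence $X\in\Delta_g$. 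Thus $\Delta_g$ is exactly the nullity of the flat $N_gM$-valued bilinear form $\alpha|_{\Gamma\times\Gamma}$, so $\dim(\Gamma/\Delta_g)=p-1$ and the induced form on $\Gamma/\Delta_g$ has trivial nullity. Setting $L:=\mathrm{span}\{\alpha(X,Y):X\in\Gamma,\ Y\in TM\}$, applying Chern--Kuiper to this induced form forces $\dim L\geq p-1$.

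The heart of the matter is to show $\dim L=p-1$, i.e. that $L^\perp\cap N_gM$ is a line. I would argue by contradiction from $\dim L=p$ using the strengthened relation $A_{\alpha(X,W)}Z=A_{\alpha(X,Z)}W$, valid for $X\in\Gamma$ (a rewriting of the Gauss equation; equivalently, the map $(W,Z,U)\mapsto\langle\alpha(X,W),\alpha(Z,U)\rangle$ is fully symmetric). If no normal direction were orthogonal to all of $\alpha(\Gamma,TM)$, this relation forces the shape operators into a common low-rank form, so that $\bigcap_\xi\ker A_\xi=\Delta_g$ would have dimension at least $n-p$, contradicting $\nu_g\leq n-p-1$. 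This is exactly where the codimension hypothesis is consumed, and it is the step the codimension-two treatment avoided, since there $\Gamma/\Delta_g$ is one-dimensional and $L$ is automatically a line; for general $p$ this is a genuine flat-bilinear-form estimate, and I expect it to be the main technical obstacle. Once $\dim L=p-1$ is established, I pick a unit $\eta$ spanning $L^\perp\cap N_gM$, so that $A_\eta|_\Gamma=0$, and write $\alpha^G:=\alpha-\langle A_\eta\,\cdot\,,\cdot\rangle\eta$ for the $L$-component of $\alpha$.

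Next I would show that $\alpha^G$ is a flat bilinear form. A direct expansion gives
\[
\langle\alpha^G(X,W),\alpha^G(Y,Z)\rangle-\langle\alpha^G(X,Z),\alpha^G(Y,W)\rangle
=\langle R(X,Y)Z,W\rangle-\big(\langle A_\eta X,W\rangle\langle A_\eta Y,Z\rangle-\langle A_\eta X,Z\rangle\langle A_\eta Y,W\rangle\big),
\]
so flatness of $\alpha^G$ is equivalent to the curvature of $M^n$ being the \emph{hypersurface} Gauss tensor of the single operator $A_\eta$. This holds trivially when an entry lies in $\Gamma\subseteq\ker A_\eta$, and on $\Gamma^\perp$ it follows from the same low-rank mechanism of the previous step, which shows no further normal direction can contribute curvature. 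As a byproduct, for $X\in\ker A_\eta$ one has $\alpha(X,\cdot)=\alpha^G(X,\cdot)$, and flatness of $\alpha^G$ then gives $R(X,\cdot)=0$; hence $\ker A_\eta\subseteq\Gamma$, and therefore $\ker A_\eta=\Gamma$.

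Finally I would integrate. Projecting the Codazzi equation of $g$ onto $\ell$ and $L$, I would check that $g_*TM\oplus\ell$ is the restriction to $g(M)$ of the tangent bundle of the $(n+1)$-manifold $N:=\{\,g(x)+s\,\eta(x)\,\}$, that $F(x,s)=g(x)+s\,\eta(x)$ is an immersion, and that the induced metric on $N$ is flat---here flatness of $\alpha^G$ is what controls the curvature of $N$. Taking $G:N^{n+1}\to\R^{n+p}$ to be the inclusion and $\hat g:M^n\to N^{n+1}$ the isometric embedding as the slice $s=0$, the vector $\eta$ is a unit normal of $\hat g$ in $N$ with shape operator $A_\eta$, so $\Delta_{\hat g}=\ker A_\eta=\Gamma$; and since $\alpha^G$ is a flat bilinear form of rank $p-1$ realizing the extremal nullity, the relative nullity of $G$ is $\nu_G=(n+1)-(p-1)$, as claimed. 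The remaining bookkeeping---constancy of ranks on a dense open set, smoothness of $\eta$ and $L$, and the immersion check for $F$---is routine given the constant-rank reduction at the start.
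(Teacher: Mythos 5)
Your overall architecture --- isolate the line $\mathcal{S}(\beta)^{\perp}$ in the normal bundle, project $\alpha$ onto $\mathcal{S}(\beta)$, prove flatness of the projection, and extend $g$ to a flat $N^{n+1}$ --- is the paper's strategy. Two of your intermediate claims are correct and essentially as in the paper: $\dim\mathcal{S}(\beta)\geq p-1$ via Moore's lemma, and (what you flag as ``the main technical obstacle'') $\dim\mathcal{S}(\beta)\leq p-1$, which is in fact a two-line consequence of Moore rather than a hard rank estimate: flatness of $\beta=\alpha|_{TM\times\Gamma}$ gives $\alpha(\D_\beta,TM)\perp\mathcal{S}(\beta)$ together with $\dim\D_\beta\geq n-p$, so $\mathcal{S}(\beta)=T^{\perp}_gM$ would force $\nu_g\geq n-p$. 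Your flatness of $\alpha^G$ and the identity $\ker A_\eta=\Gamma$ are also salvageable, though the clean reason for the former is the dimension count $TM=\D_\beta+\Gamma$ combined with $\alpha^G(\D_\beta,TM)=0$, which reduces everything to $\alpha|_{\Gamma\times\Gamma}$; the ``low-rank mechanism on $\Gamma^{\perp}$'' you invoke is not an argument.

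The genuine gap is the integration step. You extend along the normal line field, $F(x,s)=g(x)+s\,\eta(x)$. For that $N$ to have tangent space $g_*TM\oplus\mathrm{span}\{\eta\}$ at points with $s\neq0$ --- which your flatness computation for $N$ presupposes --- you need $\tilde{\nabla}_X\eta\in g_*TM\oplus\mathrm{span}\{\eta\}$, i.e.\ $(\nabla^{\perp}_X\eta)_{\mathcal{S}(\beta)}=0$, i.e.\ that $\mathrm{span}\{\eta\}$ is parallel in the normal connection. This fails in general: when $g=G\circ\hat{g}$ with $G$ flat and non-cylindrical, the straight lines of $\R^{n+p}$ contained in $N$ through $\hat{g}(x)$ are the relative-nullity rulings of $G$, which are transversal to $M^n$ but not normal to it, and the normal $\eta$ of $\hat{g}$ in $N$ is not $\nabla^{\perp}$-parallel. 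The correct extension is along the rank-one bundle $\Lambda\subseteq g_*TM\oplus\mathrm{span}\{\eta\}$ of vectors $v$ with $(\tilde{\nabla}_Xv)_{\mathcal{S}(\beta)}=0$ for all $X$, so each ruling direction carries a tangential correction $g_*Y+\eta$. Producing $\Lambda$ and showing it has rank exactly one forces you to control the mixed terms $\phi(X,\eta):=(\nabla^{\perp}_X\eta)_{\mathcal{S}(\beta)}$ and to prove flatness of the full tensor $\phi$ on $TM\times(TM\oplus\mathrm{span}\{\eta\})$, not merely of $\alpha^G=\phi|_{TM\times TM}$. That is precisely the content of the paper's Codazzi computations (showing $\phi(\D_\beta,\eta)=0$ and, after Moore's diagonalization $\alpha(Z_i,Z_j)=\delta_{ij}\rho_i$ of $\alpha|_{\Gamma\times\Gamma}$, that $\phi(Z_i,\eta)=\lambda_i(\eta)\rho_i$). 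Your proposal never differentiates $\eta$, so this is where the proof is actually missing, not in the ``routine bookkeeping''.
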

In particular, with \tref{teorema de composicion chern-kuiper mu=nu+p-1} we characterize locally the submanifolds $\map{g}{M}{n}{\R^{n+2}}$ with $\mu\neq\nu_g$.
Observe that the inequality condition in the last result is equivalent to $M^n$ being nowhere flat.

Using our technique, we analyze the next case of Chern-Kuiper's inequalities. 
We show that if $p\geq 3$ and $\nu_g=\mu-p+2\leq n-p-1$ then, on connected components of a dense subset of $M^n$, $g$ is also a composition.

\begin{thm}\label{thm Ch-K nu+p-2=mu}
    Let $\map{g}{M}{n}{\R^{n+p}}$ be an isometric immersion with $p\geq 3$ and
    $$\nu_g=\mu-p+2\leq n-p-1.$$
    Let $U$ be a connected component of an open dense subset of $M^n$ where $(p-\ell):=\dim(\mathcal{S}(\alpha|_{TM\times\Gamma}))$ is constant.
    Then $\ell\in\{1,2\}$ and $g|_U=G\circ\hat{g}$ is a composition, where $\map{G}{N}{n+\ell}{\R^{n+p}}$ is a flat submanifold and $\map{\hat{g}}{U\subseteq M}{n}{N^{n+\ell}}$ is  an isometric embedding. 
    Moreover, $\D_{\hat{g}}=\Gamma$ and $\nu_G\in\{(n+1)-(p-j)\}_{j=\ell}^2$.
\end{thm}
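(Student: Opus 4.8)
The plan is to adapt the strategy of \tref{teorema de composicion chern-kuiper mu=nu+p-1}: isolate the normal subbundle that will serve as $N_GN$, prove it is parallel, and integrate the complementary distribution to a flat $N^{n+\ell}$. The relevant object is the restriction $\beta:=\alpha|_{TM\times\Gamma}$ of the second fundamental form $\alpha$ of $g$. For $T\in\Gamma$ the Gauss equation gives $\langle\alpha(T,Z),\alpha(X,Y)\rangle=\langle\alpha(T,Y),\alpha(X,Z)\rangle$ for all $X,Y,Z\in TM$, and specialising $X$ to $\Gamma$ shows that $\beta$ is a flat bilinear form with $\mathcal{S}(\beta)=S:=\mathcal{S}(\alpha|_{TM\times\Gamma})$. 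On $U$ this has constant rank $p-\ell$, so $S$ is a smooth normal subbundle there, which is precisely why one restricts to a component of the open dense set where $\dim S$ is locally constant. I would then show that $g|_U=G\circ\hat g$ with $N_GN=S$ and $\dim N=n+\ell$, the complement $S^{\perp}\subseteq N_gM$ of rank $\ell$ becoming $G_*N_{\hat g}M$.

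First I would pin down $\ell$. With $\mu-\nu_g=p-2$, the quotient $\Gamma/\D_g$ has dimension $p-2$, the right null space of $\beta$ is exactly $\D_g$, and $\dim(TM/\D_g)=n-\nu_g\ge p+1$ by hypothesis. Feeding these into the structure theory of flat bilinear forms that underlies the Chern--Kuiper inequality \eref{desigualdades de Chern-Kuiper's} --- now with the sharp target $S$ in place of the full normal space --- I expect to obtain $p-2\le\dim S\le p-1$, i.e. $\ell\in\{1,2\}$. The upper bound (ruling out $\ell=0$, the non-genuine composition) is where $\nu_g\le n-p-1$ is essential: if $S$ were all of $N_gM$, flatness of $\beta$ together with $\dim(TM/\D_g)\ge p+1$ would force either additional relative nullity or flatness of $M^n$, both incompatible with $\mu-\nu_g=p-2$ and $\nu_g\le n-p-1$. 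The lower bound $\dim S\ge p-2$ (ruling out $\ell\ge3$) should follow from the same analysis applied to $\Gamma/\D_g$.

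The core is to upgrade $S$ to a parallel normal subbundle, $\nabla^{\perp}_XS\subseteq S$ for all $X\in TM$, and to show that $g_*TM\oplus S^{\perp}$ integrates to a flat $N^{n+\ell}\supseteq g(U)$. Parallelism of $S$ should come from combining the Codazzi equation with the flatness of $\beta$ and the relative-nullity structure; the flatness of $N$ reduces to showing that the $S$-component of $\alpha$ is a flat symmetric bilinear form on all of $TM\times TM$, which I would get by propagating the flatness of $\beta=\alpha|_{TM\times\Gamma}$ off $\Gamma$ via Codazzi. Granting this, $N$, $\hat g\colon U\to N$ and the inclusion $G\colon N\to\R^{n+p}$ are produced exactly as in \tref{teorema de composicion chern-kuiper mu=nu+p-1}, now with an $\ell$-dimensional fibre $S^{\perp}$ rather than a line. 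The identity $\D_{\hat g}=\Gamma$ is then almost automatic: for $T\in\Gamma$ one has $\alpha(T,\cdot)\in S=N_GN$, so the $S^{\perp}$-component $G_*\alpha_{\hat g}(T,\cdot)$ of $\alpha(T,\cdot)$ vanishes, giving $\Gamma\subseteq\D_{\hat g}$, while $N$ flat forces $\D_{\hat g}$ into the nullity $\Gamma$ of $R^M$ through the Gauss equation of $\hat g$. Finally, applying Chern--Kuiper to the flat map $G$ and analysing the possible ranks of $\alpha_G$, whose image is $S$ of rank $p-\ell$, yields the asserted values of $\nu_G$.

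The main obstacle is the genuinely new case $\ell=1$, in which $\dim S=p-1$ strictly exceeds $\mu-\nu_g=p-2$: here $S$ carries one direction invisible to the curvature tensor, so neither its parallelism nor the flatness of the resulting $N$ can be read off from the Gauss equation on $\Gamma$ alone, and controlling that extra direction is what forces a full use of Codazzi. A secondary difficulty is the lack of global constant rank of $S$, which is what confines the conclusion to a component $U$ of the open dense set where $\dim S$ is constant; handling $\ell=1$ and $\ell=2$ uniformly within one argument is the part demanding the most care.
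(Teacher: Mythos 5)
Your skeleton matches the paper's: \lref{lemma bound of L} gives $p-2\le\dim\mathcal{S}(\beta)\le p-1$, hence $\ell\in\{1,2\}$, and the case $\ell=2$ (where $\dim\mathcal{S}(\beta)=\mu-\nu_g$) is exactly the situation covered by \tref{teo de composicion para betaD}, so that part of your plan goes through. The genuine gap is the case $\ell=1$, which you correctly flag as the main obstacle but do not resolve; acknowledging that ``controlling the extra direction forces a full use of Codazzi'' is not an argument.

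Two specific things are missing. First, when $\ell=1$ the composition need not exist a priori: the paper's \tref{Thm L=1} shows there is a dichotomy governed by $k=\dim\alpha(\D_\beta,\D_\beta)\in\{0,1\}$, and when $k=0$ the right nullity of $\phi=\phi_L$ coincides with its left nullity, the ruled extension degenerates, and $g$ is merely $\D_\beta$-ruled with no flat $N^{n+1}$ containing it. Your plan of upgrading $S$ to a parallel subbundle and ``integrating $g_*TM\oplus S^\perp$'' silently assumes the extension always exists. The paper must instead exclude $k=0$ by a separate contradiction: if $g$ were $\D_\beta$-ruled, then since $\dim(\D_\beta+\Gamma)\ge n-1$ the tensor $\phi|_{TM\times TM}$ is flat, so the shape operator $A_\rho$ of a unit generator of $L$ satisfies the Gauss equation while annihilating $\D_\beta+\Gamma$, forcing $\mu\ge n-2$ against $\mu=\nu_g+p-2\le n-3$. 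Second, even when $k=1$ and the extension $G:N^{n+1}\to\R^{n+p}$ exists, its flatness is not obtained by ``propagating the flatness of $\beta$ off $\Gamma$ via Codazzi''; it comes from a dimension count on the nullity $\hat\Gamma=\Gamma_\phi^r$ of $N^{n+1}$, namely $\dim\hat\Gamma\ge(\mu-\nu_g)+\nu_G\ge(p-2)+(n+1)-(p-1)=n$, together with the fact that a nullity of codimension one is impossible for a non-flat manifold, whence $\hat\Gamma=TN$. Without these two steps the $\ell=1$ case of the theorem is unproved.
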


The organization of this paper is as follows.
In \sref{section preliminaries}, we recall flat bilinear forms, properties of the nullities, and ruled extensions, among others.
\sref{Ch-K section} is devoted to analyzing the submanifolds with $\nu_g\neq\mu$. 
More precisely, is divided into subsections to analyze each possible value of $\mu-\nu_g$.
Lastly, in \sref{final section}, we give some final comments about this work.

\vspace{0.4cm}

{\it Acknowledgment.} This work is a portion of the author's Ph.D. thesis at IMPA - Rio de Janeiro. The author would like to thank his adviser, Prof. Luis Florit, for his orientation. 

    
\section{Preliminaries}\label{section preliminaries}
In this section, we introduce the main techniques used in this article. Firstly, we discuss the basic properties of bilinear forms. 
Then, we analyze the two principal distributions of this work, which are the nullity and the relative nullity.
The final subsection summarizes the properties of ruled extensions.
\subsection{Flat bilinear forms}\label{seccion preliminar formas bilineares}

Given a bilinear map $\beta:\mathbb{V}\times \mathbb{U}\rightarrow \mathbb{W}$ between real vector spaces, set
\begin{equation*}
    \mathcal{S}(\beta)=\text{span}\{\beta(X,Y):X\in \mathbb{V}, Y\in \mathbb{U}\}\subseteq \mathbb{W}.
\end{equation*}
The (left) \emph{nullity} of $\beta$ is the vector subspace
\begin{equation*}
    \Delta_\beta=\{X\in \mathbb{V}:\beta(X,Y)=0\, ,\,\forall Y\in \mathbb{U}\}\subseteq \mathbb{V}.
\end{equation*}
For each $Y\in \mathbb{U}$ we denote by $\beta^Y:\mathbb{V}\rightarrow \mathbb{W}$ the linear map defined by $\beta^Y\hspace{-0.1cm}(X)=\beta(X,Y)$. 
Let
\begin{equation*}
    \text{Re}(\beta)=\{Y\in \mathbb{U}:\dim(\text{Im}(\beta^Y))\text{ is maximal}\}
\end{equation*}
be the set of (right) \emph{regular elements of} $\beta$, which is open and dense in $\mathbb{U}$. 
There are similar definitions for left regular elements and right nullity.

Assume now that $\mathbb{W}$ has a positive definite inner product $\langle\cdot,\cdot\rangle:\mathbb{W}\times \mathbb{W}\rightarrow\mathbb{R}$. 
We say that $\beta$ is $\mathit{flat}$ if
\begin{equation*}
    \langle\beta(X,Y),\beta(Z,W)\rangle=\langle\beta(X,W),\beta(Z,Y)\rangle\quad\forall X,Z\in \mathbb{V}\quad \forall Y,W\in \mathbb{U}.
\end{equation*}

The next result is due to Moore in \cite{Moo}.
It lets us determine the nullity of a flat bilinear form.
\begin{lema}\label{nulidad para no simetrica}
    Let $\beta:\mathbb{V}\times \mathbb{U}\rightarrow \mathbb{W}$ be a flat bilinear form. 
    If $Z_0\in \mathbb{U}$ is a right regular element, then $\Delta_\beta=\ker(\beta^{Z_0})$.
    In particular,
    $\dim(\Delta_\beta)=\dim(\mathbb{V})-\dim(\mathrm{Im}(\beta^{Z_0}))\geq \dim(\mathbb{V})-\dim(\mathbb{W})$.

\end{lema}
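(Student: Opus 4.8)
The plan is to prove the two inclusions $\Delta_\beta\subseteq\ker(\beta^{Z_0})$ and $\ker(\beta^{Z_0})\subseteq\Delta_\beta$, with the flatness hypothesis and the regularity of $Z_0$ entering only in the second. The first inclusion is immediate: if $X\in\Delta_\beta$ then $\beta(X,Y)=0$ for every $Y\in\mathbb{U}$, in particular $\beta^{Z_0}(X)=\beta(X,Z_0)=0$, so $X\in\ker(\beta^{Z_0})$.

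For the reverse inclusion I would fix $X\in\ker(\beta^{Z_0})$ and an arbitrary $Y\in\mathbb{U}$, aiming to show $\xi:=\beta(X,Y)=0$. The first step is to extract orthogonality from flatness by taking $W=Z_0$: for every $Z\in\mathbb{V}$ one gets $\langle\beta(X,Y),\beta(Z,Z_0)\rangle=\langle\beta(X,Z_0),\beta(Z,Y)\rangle=0$, since $\beta(X,Z_0)=\beta^{Z_0}(X)=0$. Hence $\xi\perp\mathrm{Im}(\beta^{Z_0})=:\mathbb{W}_0$. Because the inner product is positive definite, a nonzero vector orthogonal to $\mathbb{W}_0$ cannot lie in $\mathbb{W}_0$; so it suffices to derive a contradiction from the assumption $\xi\neq0$ by showing that in that case $\xi$ would nonetheless have to belong to $\mathbb{W}_0$.

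That contradiction should come from the maximality encoded in the regularity of $Z_0$. Consider the one-parameter family $Z_s:=Z_0+sY$. Since $\beta^{Z_0}(X)=0$, we have $\beta^{Z_s}(X)=\beta(X,Z_0)+s\beta(X,Y)=s\xi$, so $\xi\in\mathrm{Im}(\beta^{Z_s})$ for every $s\neq0$. On the other hand, choose $X_1,\dots,X_r\in\mathbb{V}$ with $\{\beta^{Z_0}(X_i)\}$ a basis of $\mathbb{W}_0$, where $r:=\dim\mathbb{W}_0$. By continuity the vectors $\beta^{Z_s}(X_i)$ remain linearly independent for $s$ small, so $\dim\mathrm{Im}(\beta^{Z_s})\geq r$; since $r$ is maximal by regularity of $Z_0$, this is an equality and $\mathrm{Im}(\beta^{Z_s})=\mathrm{span}\{\beta^{Z_s}(X_i)\}$. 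The crux is then that the distance from the fixed vector $\xi$ to this $r$-plane depends continuously on $s$ and equals $\|\xi\|>0$ at $s=0$ (because $\xi\perp\mathbb{W}_0$); hence it stays positive for $s$ small, forcing $\xi\notin\mathrm{Im}(\beta^{Z_s})$ and contradicting $\xi\in\mathrm{Im}(\beta^{Z_s})$. Therefore $\xi=0$, and since $Y$ was arbitrary, $X\in\Delta_\beta$.

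Once both inclusions are in place, $\Delta_\beta=\ker(\beta^{Z_0})$ yields $\dim(\Delta_\beta)=\dim(\mathbb{V})-\dim(\mathrm{Im}(\beta^{Z_0}))$ by rank--nullity, and the stated bound follows from $\dim(\mathrm{Im}(\beta^{Z_0}))\leq\dim(\mathbb{W})$. The main obstacle is the continuity argument of the previous paragraph: one must rule out that the moving $r$-plane $\mathrm{Im}(\beta^{Z_s})$ rotates so as to absorb $\xi$, and phrasing this through the distance function (continuous and positive at $s=0$) rather than through coordinate coefficients, which could blow up, is what makes it rigorous. A cleaner alternative, if it is available from the theory of flat forms, would be to show directly that $\mathrm{Im}(\beta^{Z})\subseteq\mathrm{Im}(\beta^{Z_0})$ for all $Z$ when $Z_0$ is regular, which would replace the limiting argument by a one-line membership.
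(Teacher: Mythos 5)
Your proof is correct; the paper states this lemma without proof (citing Moore), and your argument --- extracting $\beta(X,Y)\perp\mathrm{Im}(\beta^{Z_0})$ from flatness with $W=Z_0$, then perturbing to $Z_s=Z_0+sY$ and using lower semicontinuity of rank together with the maximality from regularity --- is precisely the standard argument from Moore's paper. The continuity step you flag is sound as phrased: the orthogonal projection onto $\mathrm{span}\{\beta^{Z_s}(X_i)\}$ varies continuously while the $\beta^{Z_s}(X_i)$ stay linearly independent, so the distance from the fixed vector $\xi$ to $\mathrm{Im}(\beta^{Z_s})$ is continuous and positive near $s=0$, giving the contradiction with $s\xi=\beta^{Z_s}(X)$.
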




\subsection{Intrinsic and relative nullities} 
We describe now the two main distributions of this work, the nullity of a Riemannian manifold and the relative nullity of a submanifold.
\vspace{0.4cm}

Given a Riemannian manifold $M^n$ and $x\in M^n$, the {\it nullity of} $M^n$ {\it at} $x$ is the nullity of its curvature tensor $R$ at $x$, that is, the subspace of $T_xM$ given by
\begin{equation*}
    \Gamma(x)=\{X\in T_xM: R(X,Y)=0,\forall Y\in T_xM\}.
\end{equation*}
The {\it rank of} $M^n$ {\it at} $x$ is defined by $n-\mu$, where $\mu=\dim(\Gamma(x))$. 
As the results that we are looking for are of local nature and our subspaces are all either kernels or images of smooth tensor fields, without further notice we will always work on each connected component of an open dense subset of $M^n$ where all these dimensions are constant and thus all the subbundles are smooth. 
In particular, we assume that $\mu$ is constant and hence the second Bianchi identity implies that $\Gamma$ is a totally geodesic distribution, namely, $\nabla_{\Gamma}\Gamma\subseteq\Gamma$.

For an isometric immersion $g:M^n\rightarrow \R^{n+p}$ we denote by $\alpha^g:TM\times TM\rightarrow T^\perp _gM$ its second fundamental form. 
We define the {\it relative nullity of} $g$ \emph{at} $x$ as the nullity of $\alpha^g(x)$, that is, $\Delta_g(x):=\D_{\alpha^g}$. 
The \emph{rank of} $g$ is the number $n-\nu_g$, where $\nu_g=\dim(\Delta_g)$. 
Gauss equation implies that $\D_f\subseteq\Gamma$, while Codazzi equation implies that it is a totally geodesic distribution of $M^n$.

The isometric immersion $g:M^n\rightarrow\mathbb{R}^{n+p}$ is said to be $R^d${\it -ruled}, if $R^d\subseteq TM$ is a $d$-dimensional totally geodesic distribution whose leaves are mapped by $g$ onto (open subsets of) affine subspaces of $\mathbb{R}^{n+p}$. 


\subsection{Revisiting ruled extensions}\label{section revisiting ruled extensions}
Given a submanifold $\map{g}{M}{n}{\R^{n+p}}$ with $\mu\neq\nu_g\leq n-p-1$, we want to describe $g$ as a composition $G\circ\hat{g}$, where $\map{G}{N}{n+\ell}{\R^{n+p}}$ is flat as in Theorems \ref{teorema de composicion chern-kuiper mu=nu+p-1} and \ref{thm Ch-K nu+p-2=mu}.
For this, we will use ruled extensions.
The present subsection describes the basic properties of these extensions, many of which are already present in the literature; see \cite{DFGenDefSub} and \cite{DTcompii} for example.

\text{ }

In order to describe $g$ as such a composition, the first step is to find a rank $\ell$ subbundle $L=L^\ell\subseteq T^{\perp}_gM$ to be a candidate of normal bundle of $\hat{g}$. 
Then, we consider the tensor $\phi:=\phi_L:TM\times(TM\oplus L)\rightarrow L^{\perp}$ given by
\begin{equation}\label{phi en la seccion chern kuiper}
    \phi(X,v)=(\tilde{\nabla}_Xv)_{L^{\perp}},
\end{equation}
where $\tilde{\nabla}$ is the connection of $\R^{n+p}$ and the subindex denotes the orthogonal projection on the respective subspace, that is, $L^{\perp}$.
Proposition 17 of \cite{FGsingular} shows the importance of this tensor for our work.
Namely, the flatness of $\phi$ is equivalent to the local existence of an isometric immersion $\map{\hat{g}}{U\subseteq M}{n}{\R^{n+\ell}}$ whose normal bundle is $L$ (up to a parallel identification), and its second fundamental form is the orthogonal projection of $\alpha^g$ onto $L$.
However, meaningful cases also occur when $\phi$ is non-necessarily flat, as shown by \tref{Thm L=1}. 

Consider the covariant derivative of $\phi$ as
$$(\overline{\nabla}_X\phi)(Y,v):=\big(\tilde{\nabla}_X(\phi(Y,v))\big)_{L^\perp}-\phi(\nabla_XY,v)-\phi(Y,(\tilde{\nabla}_Xv)_{TM\oplus L}).$$
Notice that
\begin{align*}
    (\overline{\nabla}_X\phi)(Y,v)-(\overline{\nabla}_Y\phi)(X,v)&=\big(\tilde{\nabla}_X(\phi(Y,v)\big)_{L^\perp}-\big(\tilde{\nabla}_Y(\phi(X,v))\big)_{L^\perp}-\phi([X,Y],v)\\
    &\quad+\phi(X,(\tilde{\nabla}_Yv)_{TM\oplus L})-\phi(Y,(\tilde{\nabla}_Xv)_{TM\oplus L})\\
    &=\big(\tilde{\nabla}_X\tilde{\nabla}_Yv-\tilde{\nabla}_Y\tilde{\nabla}_Xv-\tilde{\nabla}_{[X,Y]}v\big)_{L^{\perp}},
\end{align*}
but the curvature of the ambient space is zero, so $\phi$ satisfies the following Codazzi equation
\begin{equation}\label{Codazzi phi}
    (\overline{\nabla}_X\phi)(Y,v)=(\overline{\nabla}_Y\phi)(X,v),\quad\forall X,Y\in TM,\,\forall v\in TM\oplus L.
\end{equation}

We denote by $\D_\phi^l$ and $\D_\phi^r$ the left and right nullities of $\phi$ respectively.
Certainly, $\D_\phi^l\subseteq \D_\phi^r\cap TM$.
Moreover, Codazzi equation \eref{Codazzi phi} implies that $\D_\phi^l\subseteq TM$ is integrable and $\tilde{\nabla}_{\D_\phi^l}\D_\phi^r\subseteq\D_\phi^r$.
In particular, if $\D_{\phi}^l=\D_{\phi}^r$ then $g$ is $\D_{\phi}^l$-ruled.

Given such $\phi$, we define the {\it curvature of $\phi$} as the tensor $R_\phi$ given by
$$R_\phi(X,Y,v,w)=\inner{\phi(X,w)}{\phi(Y,v)}-\inner{\phi(X,v)}{\phi(Y,w)},\quad \forall X,Y\in TM,\,\forall v,w\in TM\oplus L.$$
In particular, $\phi$ is flat when its curvature is zero.
Intuitively, $\phi$ and $R_\phi$ are the second fundamental form and curvature of the extension respectively.

The curvature of $\phi$ satisfies the following Bianchi identities.

\begin{lema}[Bianchi's identities]\label{Lemma Bianchi phi}
    The curvature of $\phi$ satisfies the following first and second Bianchi identities
    \begin{equation}\label{1 Bianchi identity phi}
        \sum R_\phi(S,T,U,v)=0,\quad\forall S,T,U\in TM,\,\forall v\in TM\oplus L,
    \end{equation}
    \begin{equation}\label{Bianchi phi}
        \sum(\nabla_SR_\phi)(T,U,v,w)=0,\quad \forall S,T,U\in TM,\,\forall v,w\in TM\oplus L.
    \end{equation}
     where the sum denotes the cyclic sum over $S, T,$ and $U$. 
\end{lema}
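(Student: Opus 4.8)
The plan is to treat $\phi$ as if it were a second fundamental form and $R_\phi$ as the associated Gauss curvature, so that \eref{1 Bianchi identity phi} and \eref{Bianchi phi} become the exact analogues of the classical first and second Bianchi identities. Accordingly, I expect the first identity to come purely from the symmetry of $\phi$ on $TM\times TM$, and the second from the Codazzi equation \eref{Codazzi phi}.

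For \eref{1 Bianchi identity phi}, the key observation is that $\phi$ is symmetric on $TM\times TM$: for $X,Y\in TM$ one has $\phi(X,Y)-\phi(Y,X)=(\tilde{\nabla}_XY-\tilde{\nabla}_YX)_{L^\perp}=([X,Y])_{L^\perp}=0$, since $[X,Y]$ is tangent while $L^\perp\subseteq T^\perp_gM$. Expanding the cyclic sum $\sum R_\phi(S,T,U,v)$ directly from the definition and collecting the six resulting terms according to the common first factor $\phi(S,v)$, $\phi(T,v)$, $\phi(U,v)$, each group takes the form $\langle\phi(\cdot,v),\phi(\cdot,\cdot)-\phi(\cdot,\cdot)\rangle$, where the pair appearing in the second factor differs only by a transposition of two \emph{tangent} arguments. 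By the symmetry just noted every such group vanishes, so \eref{1 Bianchi identity phi} is purely algebraic.

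For \eref{Bianchi phi}, I first record that the inner product used to define $R_\phi$ is parallel with respect to $\overline{\nabla}$, because $\overline{\nabla}$ is the $L^\perp$-projection of the metric-compatible ambient connection $\tilde{\nabla}$. Hence $\nabla_SR_\phi$ may be expanded by the Leibniz rule into four terms, each pairing one factor $(\overline{\nabla}_S\phi)(\cdot,\cdot)$ against one factor $\phi(\cdot,\cdot)$, namely
\[(\nabla_SR_\phi)(T,U,v,w)=\langle(\overline{\nabla}_S\phi)(T,w),\phi(U,v)\rangle+\langle\phi(T,w),(\overline{\nabla}_S\phi)(U,v)\rangle-\langle(\overline{\nabla}_S\phi)(T,v),\phi(U,w)\rangle-\langle\phi(T,v),(\overline{\nabla}_S\phi)(U,w)\rangle.\]
Taking the cyclic sum over $S,T,U$ produces twelve terms; the Codazzi equation \eref{Codazzi phi}, which says exactly that $(\overline{\nabla}_S\phi)(T,\cdot)$ is symmetric in $S$ and $T$, lets me rewrite each factor $(\overline{\nabla}_S\phi)(T,\cdot)$ and match it with a companion carrying the opposite sign. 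A direct check shows the twelve summands cancel in six pairs, yielding \eref{Bianchi phi}.

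The genuinely routine part is the two expansions of $R_\phi$; the step that requires care is the bookkeeping in the second identity—fixing the correct induced connections on $TM\oplus L$ and $L^\perp$ (the projections of $\tilde{\nabla}$), checking metric compatibility so that the Leibniz rule outputs precisely the $\overline{\nabla}\phi$ terms displayed above, and then confirming that the Codazzi symmetry pairs up all twelve summands with matching signs. Since this mirrors the classical argument of differentiating the Gauss equation and substituting Codazzi, I anticipate no real obstacle beyond careful indexing.
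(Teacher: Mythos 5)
Your proof is correct and follows essentially the same route as the paper: the first identity is the purely algebraic cancellation coming from the symmetry of $\phi|_{TM\times TM}$ (which the paper leaves implicit in ``opening the curvatures and simplifying terms''), and the second is the same four-term Leibniz expansion of $\nabla_S R_\phi$ whose cyclic sum is paired off via the Codazzi equation \eref{Codazzi phi}. The only difference is presentational: the paper packages the Leibniz bookkeeping by computing at a point where all covariant derivatives of the chosen sections vanish, whereas you invoke metric compatibility of the projected connections directly.
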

\begin{proof}
    The first Bianchi identity comes from opening the curvatures and simplifying terms.

    To prove the second identity, we make the computations at a fixed point $q\in M^n$, and we take smooth sections such that the derivatives between $S, T, U, v$, and $w$ are zero at $q$, that is, $(\nabla_ST)(q)=0$, $(\tilde{\nabla}_Sv)_{TM\oplus L}(q)=0$, and so on.
    Denote by $B$ the left-hand side of \eref{Bianchi phi}, and notice that 
    \begin{align*}
        B&=\sum S\big(R_\phi(T,U,v,w)\big)\\
        &=\sum\Big[\inner{(\overline{\nabla}_S\phi)(T,w)}{\phi(U,v)}\hspace{-0.5mm}+\hspace{-0.5mm}\inner{\phi(T,w)}{(\overline{\nabla}_S\phi)(U,v)}\Big]\hspace{-0.5mm}-\hspace{-0.5mm}\sum\Big[\inner{(\overline{\nabla}_S\phi)(T,v)}{\phi(U,w)}\hspace{-0.5mm}+\hspace{-0.5mm}\inner{\phi(T,v)}{(\overline{\nabla}_S\phi)(U,w)}\Big]\\
        &=\sum\Big[\inner{(\overline{\nabla}_S\phi)(T,w)}{\phi(U,v)}\hspace{-0.6mm}-\hspace{-0.6mm}\inner{\phi(T,v)}{(\overline{\nabla}_S\phi)(U,w)}\Big]\hspace{-0.5mm}+\hspace{-0.5mm}\sum\Big[\inner{\phi(T,w)}{(\overline{\nabla}_S\phi)(U,v)}\hspace{-0.6mm}-\hspace{-0.6mm}\inner{(\overline{\nabla}_S\phi)(T,v)}{\phi(U,w)}\Big]\hspace{-0.2mm},
    \end{align*}
    rearranging the terms of both sums we get
    $$B=\sum\big[\inner{(\overline{\nabla}_S\phi)(T,w)-(\overline{\nabla}_T\phi)(S,w)}{\phi(U,v)}\big]+\sum\big[\inner{\phi(T,w)}{(\overline{\nabla}_S\phi)(U,v)-(\overline{\nabla}_U\phi)(S,v)}\big],$$
    which is zero since $\phi$ satisfies Codazzi equation \eref{Codazzi phi}.
\end{proof}

We denote by $\Gamma_{\phi}^l$ and $\Gamma_{\phi}^r$ the left and right nullities of $R_\phi$, that is
$$\Gamma_\phi^l:=\{X\in TM|\, R_\phi(X,TM,TM\oplus L,TM\oplus L)=0\}\subseteq TM,$$
and
$$\Gamma_\phi^r:=\{v\in TM\oplus L|\, R_\phi(TM,TM,TM\oplus L,v)=0\}\subseteq TM\oplus L.$$
Certainly, $\D_\phi^l\subseteq\Gamma_\phi^l$ and $\D_\phi^r\subseteq\Gamma_\phi^r$.
The first Bianchi identity \eref{1 Bianchi identity phi} shows that $\Gamma_\phi^l\subseteq\Gamma_\phi^r\cap TM$.
Moreover, the second one implies that $\Gamma_\phi^l\subseteq TM$ is integrable and $\big(\tilde{\nabla}_{\Gamma_\phi^l}\Gamma_\phi^r\big)_{TM\oplus L}\subseteq \Gamma_\phi^r$.
In particular, $\tilde{\nabla}_{\D_\phi^l}\Gamma_\phi^r\subseteq\Gamma_\phi^r$.

Consider the vector bundle $\Lambda:=\D_\phi^r\cap(\D_\phi^l)^{\perp}\subseteq TM\oplus L$, and suppose that $\text{rank}(\Lambda)=\ell=\text{rank}(L)$.
The {\it ruled extension} $G:\Lambda\rightarrow\R^{n+p}$ of $g$ is given by
$$G(\xi_q)=g(p)+\xi_q,\quad\forall q\in M^n,\,\forall \xi_q\in\Lambda_q$$
We restrict $G$ to a neighborhood $N^{n+\ell}$ of the zero section $\map{\hat{g}}{M}{n}{N^{n+\ell}}\subseteq\Lambda$ in order to $G$ being an immersion.
Moreover, we endow $N^{n+\ell}$ with the induced metric by $G$.

\begin{prop}\label{prop ruled extensions}
    Assume that $\D_\phi^l=\D_\phi^r\cap TM$ and $\rank(\Lambda)=\ell=\rank(L)$.
    Then $\D_\phi^r$ is the nullity of $G$, that is, $\D_G=\D_\phi^r$ up to a parallel identification along $\D_\phi^l$. 
    Similarly, the nullity of $N^{n+\ell}$ is given by $\Gamma_\phi^r$.
\end{prop}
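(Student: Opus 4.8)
The plan is to identify $\phi$ and $R_\phi$ with the second fundamental form and the curvature of $G$ along the zero section $\hat g(M)\subseteq N^{n+\ell}$, and then to read off the two nullities from these identifications. I work along $\hat g(M)$, where $dG$ identifies $T_{\hat g(q)}N$ with $T_qM\oplus L_q$: choosing coordinates $u=(u^1,\dots,u^n)$ on $M$ and a frame $e_1,\dots,e_\ell$ of $\Lambda$, I parametrize $N$ by $G(u,t)=g(u)+\sum_a t^a e_a(u)$, so that on the zero section $\partial_{u^i}G=g_*\partial_{u^i}$ and $\partial_{t_a}G=e_a$. The hypothesis $\D_\phi^l=\D_\phi^r\cap TM$ gives $\Lambda\cap TM=\D_\phi^r\cap(\D_\phi^l)^{\perp}\cap TM=\D_\phi^l\cap(\D_\phi^l)^{\perp}=0$, so, since $\rank\Lambda=\ell=\rank L$, the projection $TM\oplus L\to L$ restricts to an isomorphism $\Lambda\cong L$; hence the horizontal directions span $TM$, the vertical ones span $\Lambda$, together they fill $TM\oplus L$, and the normal space of $G$ along $\hat g(M)$ is exactly $L^{\perp}$.

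Differentiating the parametrization once more and projecting onto $L^{\perp}$, I expect to obtain on the zero section
\[
\alpha^G(\partial_{u^i}G,\partial_{u^j}G)=\phi(\partial_{u^i},\partial_{u^j}),\quad \alpha^G(\partial_{u^i}G,\partial_{t_a}G)=0,\quad \alpha^G(\partial_{t_a}G,\partial_{t_b}G)=0,
\]
where the first equality is Gauss's formula, namely $\phi(\partial_{u^i},\partial_{u^j})=(\alpha^g(\partial_{u^i},\partial_{u^j}))_{L^{\perp}}$; the second is the definition of $\phi$ together with $e_a\in\Lambda\subseteq\D_\phi^r$; and the third holds because $\partial_{t_a}G=e_a(u)$ does not depend on $t$ (the rulings are affine). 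Thus $\alpha^G$ is supported on the horizontal--horizontal block, where it agrees with $\phi|_{TM\times TM}$. A vector $\hat X+w$ (with $X\in TM$ horizontal and $w\in\Lambda$ vertical) then lies in $\D_G$ exactly when $\phi(X,Y)=0$ for all $Y\in TM$, the vertical slot imposing nothing. To upgrade $\phi(X,TM)=0$ to $X\in\D_\phi^l$ I trade the $L$-directions for $TM$-directions: writing $\zeta\in L$ as $\zeta=w-\eta$ with $w\in\Lambda$ and $\eta=w_{TM}\in TM$, the membership $w\in\Lambda\subseteq\D_\phi^r$ gives $\phi(X,\zeta)=-\phi(X,\eta)$, so $\phi(X,TM)=0$ already forces $\phi(X,L)=0$. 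Hence $\{X:\phi(X,TM)=0\}=\D_\phi^l$ and $\D_G=\D_\phi^l\oplus\Lambda$; finally $\D_\phi^l=\D_\phi^r\cap TM\subseteq\D_\phi^r$ yields the orthogonal splitting $\D_\phi^r=(\D_\phi^r\cap TM)\oplus(\D_\phi^r\cap(\D_\phi^l)^{\perp})=\D_\phi^l\oplus\Lambda$, so $\D_G=\D_\phi^r$.

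For the nullity of $N$, the Gauss equation of $G$ combined with the block structure of $\alpha^G$ gives $R^N(\bar a,\bar b,\bar c,\bar d)=R_\phi(a,b,c,d)$, where $a,b,c,d\in TM$ denote the horizontal parts of $\bar a,\bar b,\bar c,\bar d$; hence $\hat X+w\in\Gamma_N$ iff $R_\phi(TM,TM,TM,X)=0$. The same trading argument applies: $\Lambda\subseteq\Gamma_\phi^r$ because $\phi(\cdot,\Lambda)=0$ forces $R_\phi(\cdot,\cdot,\cdot,\Lambda)=0$, and for $\zeta=w-\eta\in L$ the antisymmetry of $R_\phi$ in its last two entries together with $w\in\Gamma_\phi^r$ give $R_\phi(A,B,w,X)=-R_\phi(A,B,X,w)=0$, whence $R_\phi(A,B,\zeta,X)=-R_\phi(A,B,\eta,X)$. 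Thus $R_\phi(TM,TM,TM,X)=0$ promotes to $R_\phi(TM,TM,TM\oplus L,X)=0$, i.e. $X\in\Gamma_\phi^r\cap TM$, and as before $\Gamma_\phi^r=(\Gamma_\phi^r\cap TM)\oplus\Lambda$, giving $\Gamma_N=\Gamma_\phi^r$.

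The main obstacle is the clash of two splittings of $TM\oplus L$: the orthogonal one, in which $\D_\phi^r$ and $\Gamma_\phi^r$ are defined, and the horizontal/vertical one $TM\oplus\Lambda$ coming from the fibration of $N$, in which $\alpha^G$ and $R^N$ are block-diagonal with only the $TM\times TM$ block nontrivial. Because $\phi$ is not symmetric (its first slot is confined to $TM$) and the vertical directions are $\Lambda$ rather than $L$, neither nullity is read off naively; the crux is the two trading steps, which convert the spurious $L$-conditions into $TM$-conditions using $\Lambda\subseteq\D_\phi^r\subseteq\Gamma_\phi^r$ and $\Lambda\cong L$, and this is exactly where the hypotheses $\D_\phi^l=\D_\phi^r\cap TM$ and $\rank\Lambda=\ell=\rank L$ are used. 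Finally, the stated identification \emph{up to parallel transport along $\D_\phi^l$} reflects that $dG$ identifies $TN$ with $TM\oplus L$ canonically only along $\hat g(M)$; since $\D_G$ and $\Gamma_N$ are totally geodesic and the rulings are affine, their values on the zero section determine them on all of $N$, the transport off $\hat g(M)$ being the parallel one along the leaves of $\D_\phi^l\subseteq\D_G$.
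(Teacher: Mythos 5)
Your proof is correct and follows essentially the same route as the paper: both compute the second fundamental form of the ruled extension along the zero section, identify it with $\phi$ (so that the Gauss equation turns $R_N$ into $R_\phi$), and read off the two nullities from $\phi$ and $R_\phi$. The only difference is bookkeeping: the paper lets the second slot sweep $TM\oplus L$ directly via the sections $\xi_*Y=g_*Y+\tilde{\nabla}_Y\xi$ of $\Lambda$, whereas you work in the $TM\oplus\Lambda$ block decomposition and recover the $L$-directions afterwards by your ``trading'' step, which amounts to the same decomposition $\D_\phi^r=\D_\phi^l\oplus\Lambda$.
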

\begin{proof}
    First, $G$ is $\D_{\phi}^r$-ruled since $\tilde{\nabla}_{\D_\phi^l}\D_\phi^r\subseteq\D_{\phi}^r$.
    Take a section $\xi$ of $N^{n+\ell}\subseteq\Lambda$ and $Y\in TM$, then 
    $$G_*(\xi_*Y)=g_*Y+\tilde{\nabla}_Y\xi\in TM\oplus L=G_*(TN).$$ 
    Notice that $TM\oplus L$ is parallel along $\D_\phi^r$ since $\D_\phi^l=\D_\phi^r\cap TM$, so $\Lambda\subseteq\D_G$.
    As $TN\cong TM\oplus \Lambda$, to compute the second fundamental form of $G$ is enough to understand $\alpha^G|_{TM\times(TM\oplus L)}$.
    If $X\in TM$ then
    $$\tilde{\nabla}_X\big(G_*(\xi_*Y)\big)=g_*\nabla_XY+\alpha(X,Y)+\tilde{\nabla}_X\tilde{\nabla}_Y\xi,$$
    so 
    $$\alpha^G(X,\xi_*Y)=\big(\tilde{\nabla}_X\big(G_*(\xi_*Y)\big)\big)_{L^{\perp}}=\big(\alpha(X,Y)\big)_{L^\perp}+\big(\tilde{\nabla}_X\tilde{\nabla}_Y\xi\big)_{L^\perp}=\phi(X,Y)+\phi(X,\tilde{\nabla}_Y\xi)=\phi(X,\xi_*Y),$$ 
    up to parallel identifications. 
    This proves that $\D_\phi^r=\D_G$.

    Finally, Gauss equation shows that the curvature tensor $R_N$ of $N^{n+\ell}$ is given by 
    $$R_N(X,Y,v,w)=\inner{\alpha^G(X,w)}{\alpha^G(Y,v)}-\inner{\alpha^G(X,v)}{\alpha^G(Y,w)}=R_\phi(X,Y,v,w),\quad\forall X,Y\in TM,\,\forall v,w\in TM\oplus L,$$
    which shows that $\Gamma_\phi^r$ is the nullity of $N^{n+\ell}$ since the remaining values of $R_N$ involve terms of relative nullity.    
\end{proof}
\begin{remark}
    We can give a weaker version of the last proposition for $0\leq \rank(\Lambda)<\text{rank}(L)$. 
    In that case, there is an orthogonal decomposition $T_G^{\perp}N=\mathcal{L}\oplus E$ such that $\text{rank}(\mathcal{L})=\text{rank}(L)-\rank(\Lambda)$, $G$ is $\D_{\phi}^r$-ruled, and this distribution coincides with the nullity of the $E$-component of $\alpha^G$.
\end{remark}

\section{Chern-Kuiper's inequalities}\label{Ch-K section}
In this section, we describe the basic properties of the submanifolds $\map{g}{M}{n}{\R^{n+p}}$ whose relative nullity $\D_g$ does not coincide with the intrinsic nullity $\Gamma$.
In the following subsections, we analyze the cases $\nu_g=\mu-p$, $\nu_g=\mu-p+1$, and $\nu_g=\mu-p+2$ respectively.

\text{ }

Let $\map{g}{M}{n}{\R^{n+p}}$ be a submanifold with non-trivial intrinsic nullity $\Gamma\neq0$. 
Call $\alpha$ its second fundamental form and $\D_g$ its relative nullity. 
Gauss equation implies that $\D_g\subseteq\Gamma$ and the flatness of the bilinear tensor $\beta:=\alpha|_{TM\times\Gamma}$.
Let $\D_{\beta}$ be the (left) nullity of $\beta$.
The flatness of $\beta$ implies that
\begin{equation}\label{alpha(Y,X) in S(beta)perp}
    \alpha(Y,X)\in\mathcal{S}(\beta)^{\perp},\quad\forall Y\in\D_\beta,\, \forall X\in TM.
\end{equation}
So in particular
$$\alpha(Y,X)\in\mathcal{S}(\beta)\cap\mathcal{S}(\beta)^{\perp}=0,\quad \forall Y\in\D_{\beta}\cap\Gamma,\,\forall X\in TM,$$
which shows that $\D_g=\D_{\beta}\cap\Gamma$.
Then, we have the following relation 
\begin{equation}\label{suma de dimensiones=suma de dimensiones}
    \nu_g+\dim(\D_{\beta}+\Gamma)=\dim(\D_{\beta})+\mu.
\end{equation}

Notice that $\D_\beta\subseteq TM$ is an integrable distribution. 
Indeed, Codazzi equation for $T_1,T_2\in\D_\beta$ gives
$$\alpha([T_1,T_2],Z)=\alpha(T_1,\nabla_{T_2}Z)-\alpha(T_2,\nabla_{T_1}Z),\quad\forall Z\in\Gamma,$$
but the left-hand side belongs to $\mathcal{S}(\beta)$ and the right-hand side to $\mathcal{S}(\beta)^{\perp}$ by \eref{alpha(Y,X) in S(beta)perp}, so $[T_1,T_2]\in\D_\beta$.

Let us recall Chern-Kuiper's inequalities, and provide a quick proof. 
\begin{prop}[Chern-Kuiper's inequalities \cite{ChKineq}]\label{teorema desigualdades chern Kuiper}
    Let $\map{g}{M}{n}{\R^{n+p}}$ be a submanifold, then \eref{desigualdades de Chern-Kuiper's} holds.
\end{prop}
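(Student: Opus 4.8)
The lower bound $\nu_g\le\mu$ is immediate and already recorded: Gauss equation gives $\D_g\subseteq\Gamma$, whence $\nu_g\le\mu$. All the content is in the upper bound $\mu\le\nu_g+p$, and the plan is to extract it from Moore's Lemma applied to the flat form $\beta:=\alpha|_{TM\times\Gamma}$, combined with the description $\D_g=\D_\beta\cap\Gamma$ obtained just before the statement. The argument is purely pointwise, so it needs none of the standing regularity assumptions on the dimensions.

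First I would record that $\beta\colon TM\times\Gamma\to T^\perp_gM$ is flat (Gauss equation, as noted) and that its codomain has dimension $p$. Applying \lref{nulidad para no simetrica} to $\beta$ with $\mathbb{V}=T_xM$ and $\mathbb{W}=T^\perp_gM$ yields the dimension estimate
$$\dim(\D_\beta)\ \ge\ \dim(T_xM)-\dim(T^\perp_gM)\ =\ n-p.$$
This is the only place where the codimension $p$ enters, and it is the crux of the whole bound.

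Next I would feed this into the identity $\D_g=\D_\beta\cap\Gamma$ established above, together with the trivial inclusion bound $\dim(\D_\beta+\Gamma)\le n$ coming from $\D_\beta,\Gamma\subseteq T_xM$. Inclusion–exclusion then gives
$$\nu_g=\dim(\D_\beta\cap\Gamma)=\dim(\D_\beta)+\mu-\dim(\D_\beta+\Gamma)\ \ge\ (n-p)+\mu-n\ =\ \mu-p,$$
which rearranges to $\mu\le\nu_g+p$, completing the proof of \eref{desigualdades de Chern-Kuiper's}.

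I do not expect a genuine obstacle here: once one has the flatness of $\beta$, Moore's dimension estimate, and the computation $\D_g=\D_\beta\cap\Gamma$, the two inequalities are short. The only real decision is to apply Moore's Lemma to $\beta$ on the \emph{full} tangent space $TM$ (so that the codomain dimension $p$, and not some larger bound, controls $\dim\D_\beta$), rather than to $\alpha$ directly; everything else is the inclusion–exclusion bookkeeping above.
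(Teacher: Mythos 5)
Your proposal is correct and follows essentially the same route as the paper: Moore's Lemma applied to the flat form $\beta=\alpha|_{TM\times\Gamma}$ gives $\dim(\D_\beta)\geq n-p$, and the inclusion--exclusion identity for $\D_g=\D_\beta\cap\Gamma$ (which is exactly equation \eref{suma de dimensiones=suma de dimensiones} in the paper) yields $\mu\leq\nu_g+p$. No substantive differences.
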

\begin{proof}
    As $\D_g\subseteq\Gamma$ then $\nu_g\leq\mu$.
    Take $Z_0\in\text{Re}(\beta)\subseteq\Gamma$ a (right) regular element of $\beta$, then by \lref{nulidad para no simetrica} and \eref{suma de dimensiones=suma de dimensiones} we get that
    \begin{equation}\label{hhhhh}
        \nu_g+n\geq\nu_g+\dim(\D_\beta+\Gamma)=\dim(\D_\beta)+\mu=n-\dim(\text{Im}(\beta^{Z_0}))+\mu\geq n-p+\mu.
    \end{equation}
    which proves the second inequality of \eref{desigualdades de Chern-Kuiper's}.
\end{proof}

Before analyzing the inequality cases of the Chern-Kuiper's inequalities, we present a result that gives us bounds for the rank of $\mathcal{S}(\beta)$ under the hypothesis of $\nu_g\leq n-p-1$.
\begin{lema}\label{lemma bound of L}
    Let $\map{g}{M}{n}{\R^{n+p}}$ be a submanifold with $\nu_g\leq n-p-1$.
    Then
    \begin{equation}\label{eq bound of the rank}
        \mu-\nu_g\leq\dim\big(\mathcal{S}(\beta)\big)\leq p-1.
    \end{equation}
\end{lema}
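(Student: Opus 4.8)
The plan is to prove the two inequalities separately; the first is a cost-free refinement of the Chern--Kuiper computation, while the second is where the hypothesis $\nu_g\leq n-p-1$ genuinely enters.

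For the lower bound $\mu-\nu_g\leq\dim(\mathcal{S}(\beta))$, I would simply revisit the chain \eref{hhhhh} from the proof of \pref{teorema desigualdades chern Kuiper}. There the only estimate used on $\dim(\mathrm{Im}(\beta^{Z_0}))$ was the crude bound by $p$. Since $\mathrm{Im}(\beta^{Z_0})\subseteq\mathcal{S}(\beta)$ by definition, I would keep the first inequality and the two central equalities of \eref{hhhhh}, obtaining $\nu_g+n\geq n-\dim(\mathrm{Im}(\beta^{Z_0}))+\mu$, that is $\mu-\nu_g\leq\dim(\mathrm{Im}(\beta^{Z_0}))\leq\dim(\mathcal{S}(\beta))$. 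Note this half requires no hypothesis on $\nu_g$.

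For the upper bound I would argue by contradiction, assuming $\dim(\mathcal{S}(\beta))=p$, i.e. $\mathcal{S}(\beta)=T^{\perp}_gM$ and hence $\mathcal{S}(\beta)^{\perp}=0$. The key step is to feed this into \eref{alpha(Y,X) in S(beta)perp}: every $Y\in\D_\beta$ satisfies $\alpha(Y,X)\in\mathcal{S}(\beta)^{\perp}=0$ for all $X\in TM$, so $Y\in\D_g$, giving $\D_\beta\subseteq\D_g$. Combined with the already established inclusion $\D_g=\D_\beta\cap\Gamma\subseteq\D_\beta$, this forces $\D_\beta=\D_g$, hence $\nu_g=\dim(\D_\beta)$. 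Now \lref{nulidad para no simetrica} applied to a regular $Z_0\in\Gamma$ yields $\dim(\D_\beta)=n-\dim(\mathrm{Im}(\beta^{Z_0}))\geq n-p$, since $\mathrm{Im}(\beta^{Z_0})$ lies in the $p$-dimensional normal space. Thus $\nu_g\geq n-p$, contradicting $\nu_g\leq n-p-1$, and therefore $\dim(\mathcal{S}(\beta))\leq p-1$.

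The main (really the only) obstacle is recognizing where the hypothesis must enter, namely in the second inequality, and that the correct way to exploit $\mathcal{S}(\beta)=T^{\perp}_gM$ is through \eref{alpha(Y,X) in S(beta)perp}, which collapses the distinction between the left nullity $\D_\beta$ of $\beta$ and the relative nullity $\D_g$. Once that collapse is observed, Moore's lemma closes the argument at once. I would only double-check the edge constants, such as the inclusion $\mathrm{Im}(\beta^{Z_0})\subseteq T^{\perp}_gM$ forcing $\dim(\D_\beta)\geq n-p$, but I expect no real difficulty there.
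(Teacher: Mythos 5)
Your proof is correct and follows essentially the same route as the paper: the lower bound via the dimension identity \eqref{suma de dimensiones=suma de dimensiones} together with Moore's Lemma \ref{nulidad para no simetrica}, and the upper bound by contradiction, using \eqref{alpha(Y,X) in S(beta)perp} to collapse $\D_\beta$ onto $\D_g$ and then Moore's lemma to contradict $\nu_g\leq n-p-1$. Your side remark that the lower bound needs no hypothesis on $\nu_g$ is also accurate.
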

\begin{proof}
    The first inequality comes from \eref{suma de dimensiones=suma de dimensiones} and \lref{nulidad para no simetrica} since
    $$n+\nu_g\geq\dim(\D_\beta+\Gamma)+\nu_g=\dim(\D_\beta)+\mu\geq n-\dim\big(\mathcal{S}(\beta)\big)+\mu.$$
    
    On the other hand, suppose by contradiction that $\mathcal{S}(\beta)=T^\perp_{g}M$. Then, by
    \eref{alpha(Y,X) in S(beta)perp}, we have that $\D_\beta=\D_g$. 
    However, in this case, \lref{nulidad para no simetrica} implies that
    $$\nu_g=\dim(\D_\beta)\geq n-\dim\big(\mathcal{S}(\beta)\big)=n-p,$$
    which is absurd.
\end{proof}

\subsection{The case \texorpdfstring{$\nu_g=\mu-p$}{TEXT}}
In this subsection, we analyze the maximal case of Chern-Kuiper's inequalities. 
We also describe the technique that will be used for the following cases.

\text{ }

The next result shows that only flat submanifolds attain the second inequality of \eref{desigualdades de Chern-Kuiper's}.

\begin{prop}\label{proposicion caso extremo ChK es flat}
    Let $\map{g}{M}{n}{\R^{n+p}}$ be a submanifold with \begin{equation}\label{igualdad extrema de chern-kuiper, planas}
        \mu=\nu_g+p.
    \end{equation}
    Then $M^n$ is flat, in particular $\mu=\nu_g+p=n$.
\end{prop}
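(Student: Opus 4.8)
The plan is to run the chain of (in)equalities \eref{hhhhh} from the proof of \pref{teorema desigualdades chern Kuiper} and observe that the extremal hypothesis \eref{igualdad extrema de chern-kuiper, planas} forces every inequality in it to be an equality. First I would fix a right regular element $Z_0\in\text{Re}(\beta)\subseteq\Gamma$, which exists because $\mu=\nu_g+p\geq p\geq 1$ gives $\Gamma\neq 0$, and recall from \lref{nulidad para no simetrica} that $\D_\beta=\ker(\beta^{Z_0})$. Substituting $\mu=\nu_g+p$ into \eref{hhhhh}, its two ends both equal $n+\nu_g$, so both $\geq$ signs appearing in \eref{hhhhh} must in fact be equalities.

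From the equality $n-\dim(\text{Im}(\beta^{Z_0}))+\mu=n-p+\mu$ I would read off $\dim(\text{Im}(\beta^{Z_0}))=p$; since $\text{Im}(\beta^{Z_0})\subseteq\mathcal{S}(\beta)\subseteq T^\perp_g M$ and the latter has rank $p$, this yields $\mathcal{S}(\beta)=T^\perp_g M$. From the equality $\nu_g+n=\nu_g+\dim(\D_\beta+\Gamma)$ I would read off $\dim(\D_\beta+\Gamma)=n$, i.e. $\D_\beta+\Gamma=TM$. These are the two structural facts that drive the rest of the argument.

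Now I would exploit the flatness identity \eref{alpha(Y,X) in S(beta)perp}: for every $Y\in\D_\beta$ and $X\in TM$ we have $\alpha(Y,X)\in\mathcal{S}(\beta)^{\perp}=\{0\}$, because $\mathcal{S}(\beta)$ is the entire normal space. Hence $\D_\beta\subseteq\D_g\subseteq\Gamma$. Combining $\D_\beta\subseteq\Gamma$ with $\D_\beta+\Gamma=TM$ forces $\Gamma=TM$, that is $\mu=n$, so the curvature tensor vanishes identically and $M^n$ is flat; then $\nu_g=\mu-p=n-p$, as claimed. The proof is essentially bookkeeping on \eref{hhhhh}, so the step I would be most careful about is the implication $\mathcal{S}(\beta)=T^\perp_g M\Rightarrow\D_\beta\subseteq\Gamma$ via \eref{alpha(Y,X) in S(beta)perp}. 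This is precisely the reverse of the mechanism used in \lref{lemma bound of L} to forbid $\mathcal{S}(\beta)$ from filling the normal space when $\nu_g\leq n-p-1$; checking consistency there is what pins down $\nu_g=n-p$ and explains why this extremal case falls outside the hypotheses of the main theorems.
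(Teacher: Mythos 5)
Your proposal is correct and follows the paper's own argument essentially verbatim: force equalities in \eref{hhhhh}, deduce $\mathcal{S}(\beta)=T^{\perp}_gM$ and $\D_\beta+\Gamma=TM$, then use \eref{alpha(Y,X) in S(beta)perp} to get $\D_\beta=\D_g\subseteq\Gamma$ and conclude $\Gamma=TM$. The extra bookkeeping you supply (reading off $\dim(\mathrm{Im}(\beta^{Z_0}))=p$ and $\dim(\D_\beta+\Gamma)=n$ explicitly) just makes the paper's terse statement "we must have equalities in \eref{hhhhh}" precise.
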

\begin{proof}
    In this case we must have equalities in \eref{hhhhh}, hence $\D_{\beta}+\Gamma=TM$ and $\text{Im}(\beta^{Z_0})=\mathcal{S}(\beta)=T^{\perp}_gM$. 
   Then \eref{alpha(Y,X) in S(beta)perp} implies that $\D_{\beta}=\D_g\subseteq\Gamma$, and thus $\Gamma=\D_{\beta}+\Gamma=TM$. 
\end{proof}
\begin{remark}
There are natural parametrizations for flat submanifolds attaining \eref{igualdad extrema de chern-kuiper, planas}; see \cite{DGgaussP} for $p=1$ and \cite{FFhyperbolen2} for $p=2$.
This is generalized in \cite{YoFlatE} for any $p\leq n$.
\end{remark}

Chern-Kuiper's inequalities and \pref{proposicion caso extremo ChK es flat} characterize the hypersurfaces with $\D_g\neq\Gamma$ by means of the Gauss parametrization. 
Hence, we assume from now on that $p\geq 2$.

There is a natural way to produce submanifolds $\map{g}{M}{n}{\R^{n+p}}$ with $\D_g\neq\Gamma$ using compositions. 
Consider a submanifold $\map{\hat{g}}{M}{n}{\R^{n+\ell}}$ with $\Gamma=\D_{\hat{g}}\neq0$, $\ell<p$, and let $\map{G}{U\subseteq\R}{n+\ell}{\R^{n+p}}$ be an isometric immersion of an open subset $U$ of $\R^{n+\ell}$ with $\hat{g}(M^{n})\subseteq U$.
Then $g:=G\circ\hat{g}$ generically has less nullity than $\hat{g}$, so $\D_g\neq\Gamma$. 
Conversely, we will use the following strategy to prove that such a $g$ must be a composition.
Naively, $\mathcal{S}(\beta)$ should be $T^{\perp}_jU$ (or, at least, contained), and so $L:=\mathcal{S}(\beta)^{\perp}\subseteq T^{\perp}_gM$ is a candidate to be $T^{\perp}_{\hat{g}}M$.
Hence, we can use the techniques of \sref{section revisiting ruled extensions}.
Namely, we will study the properties of the tensor $\phi=\phi_L$ given by \eref{phi en la seccion chern kuiper} associated with $L$, then we will use \pref{prop ruled extensions} to obtain the desired composition.

\subsection{The case \texorpdfstring{$\nu_g=\mu-p+1$}{TEXT}}
This subsection is dedicated to analyzing the following case of Chern-Kuiper's inequalities.
We will prove a more general statement.
We characterize the submanifolds such that the first inequality of \lref{lemma bound of L} is attained; they are all flat compositions.

\text{ }

Suppose that $\map{g}{M}{n}{\R^{n+p}}$ is a submanifold with $\mu=\nu_g+p-1$ and $p\geq2$.
\lref{lemma bound of L} implies that
$$\dim\big(\mathcal{S}(\beta)\big)=\mu-\nu_g=p-1.$$ 
In particular, we are in a situation where the lower bound of \eref{eq bound of the rank} is attained.
The following result analyzes this equality in complete generality.
\tref{teorema de composicion chern-kuiper mu=nu+p-1} is a direct consequence of it.

\begin{thm}\label{teo de composicion para betaD}
     Consider a submanifold $\map{g}{M}{n}{\R^{n+p}}$ with $\D_g\neq\Gamma$.
     Let $\beta=\alpha|_{TM\times\Gamma}$ and suppose that
     $$p-\ell:=\dim\big(\mathcal{S}(\beta)\big)=\mu-\nu_g<p.$$ 
     Then $g=G\circ\hat{g}$ is a composition, where $\map{G}{N}{n+\ell}{\R^{n+p}}$ is a flat submanifold and $\map{\hat{g}}{M}{n}{N^{n+\ell}}$ is an isometric embedding.
     Moreover, $\D_{\hat{g}}=\Gamma$ and $\nu_G=(n+1)-(p-\ell)$. 
\end{thm}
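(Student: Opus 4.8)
The plan is to apply the ruled-extension machinery of Section~\ref{section revisiting ruled extensions} to the candidate bundle $L:=\mathcal{S}(\beta)^\perp\subseteq T^\perp_gM$, which has rank $\ell$. First I would set up the tensor $\phi=\phi_L$ from \eref{phi en la seccion chern kuiper} and try to show that its nullities match up well enough to invoke \pref{prop ruled extensions}. The key structural input is that $\mathcal{S}(\beta)=L^\perp$, so that $\beta=\alpha|_{TM\times\Gamma}$ takes values entirely in $L^\perp$; this means the $L^\perp$-component of $\alpha$ restricted to $\Gamma$ already captures all of $\beta$. The heart of the argument is to verify that $\phi$ is flat, so that the hypotheses $\D_\phi^l=\D_\phi^r\cap TM$ and $\rank(\Lambda)=\ell$ of \pref{prop ruled extensions} hold, and then to identify the resulting extension $G$ as the desired flat submanifold with $\hat g$ an isometric embedding.

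Concretely, I would proceed as follows. First I would analyze $\phi$ on $\Gamma$: for $Z\in\Gamma$ and $X\in TM$, $\phi(X,Z)=(\tilde\nabla_XZ)_{L^\perp}$, and I expect that $\Gamma\subseteq\D_\phi^r$ — i.e.\ the whole nullity sits in the right nullity of $\phi$ — because the curvature-flatness of $\beta$ and the totally geodesic property $\nabla_\Gamma\Gamma\subseteq\Gamma$ force the normal derivatives of $\Gamma$-directions to stay controlled. The main computation is to show that $R_\phi$ vanishes, i.e.\ that $\phi$ is flat. Here I would exploit that $\mathcal{S}(\beta)=L^\perp$ has codimension $\ell$ and that $\mu-\nu_g=p-\ell$; combining \eref{suma de dimensiones=suma de dimensiones} with \lref{nulidad para no simetrica} applied to a regular element $Z_0\in\mathrm{Re}(\beta)$, the dimension count should become tight enough to pin down the image of $\beta^{Z_0}$ as all of $L^\perp$ and force $\D_\beta+\Gamma=TM$. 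This tightness is what I would leverage to prove flatness of $\phi$ via a Gauss-type identity, reducing $R_\phi$ to the Gauss equation of $g$ projected away from $L$, which vanishes by the flatness of $\beta$.

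Once flatness of $\phi$ is established, I would compute the nullities. The equality $\D_g=\D_\beta\cap\Gamma$ derived in the preamble, together with the dimension relation $\nu_g+\dim(\D_\beta+\Gamma)=\dim(\D_\beta)+\mu$ and $\D_\beta+\Gamma=TM$, gives $\dim(\D_\beta)=\nu_g+n-\mu=\nu_g-(p-\ell)$; I would use this to locate $\D_\phi^l$ and $\D_\phi^r$ and check $\rank(\Lambda)=\ell$. Applying \pref{prop ruled extensions}, the ruled extension $G:N^{n+\ell}\to\R^{n+p}$ has nullity $\D_\phi^r$ and the nullity of $N^{n+\ell}$ is $\Gamma_\phi^r$. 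To conclude that $G$ is \emph{flat}, I would argue that $\Gamma_\phi^r=TN$, i.e.\ $R_\phi\equiv0$ as a curvature tensor on $N^{n+\ell}$, which is precisely the flatness of $\phi$ just proved. The embedding $\hat g:M^n\to N^{n+\ell}$ is the zero section, and $\D_{\hat g}=\Gamma$ follows because the second fundamental form of $\hat g$ is the $L$-component of $\alpha$, whose nullity on $\Gamma$ is all of $\Gamma$ by construction of $L$.

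The hard part will be proving that $\phi$ is flat, since this does not follow merely from $g$ having flat $\beta$ — it requires using the \emph{exact} value $\dim(\mathcal{S}(\beta))=\mu-\nu_g$ rather than just an inequality. The danger is that $R_\phi$ could pick up contributions from the $L$-component of $\alpha$ interacting with the $L^\perp$-component; ruling these out is where the sharp dimension count must be fed in carefully, likely by showing that on $\Gamma$ the full second fundamental form $\alpha|_{\Gamma\times\Gamma}$ is forced into $L^\perp$ so that mixed curvature terms cancel. A secondary subtlety is verifying $\D_\phi^l=\D_\phi^r\cap TM$ (rather than mere inclusion), which is needed for \pref{prop ruled extensions}; I would obtain this from the tightness of the Chern--Kuiper estimate, which leaves no room for the right nullity to exceed the left nullity within $TM$. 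The final identification $\nu_G=(n+1)-(p-\ell)$ should then be a bookkeeping consequence of $\dim(\D_\phi^r)=\dim(\D_\phi^l)+\ell$ and $\dim(\D_\phi^l)=\nu_g=\mu-(p-\ell)$, giving $\nu_G=\mu-(p-\ell)+\ell+1-\text{(correction)}$ once the rank relation $n+1-(p-\ell)$ is matched against $\mu$ using $\mu=\nu_g+p-\ell$.
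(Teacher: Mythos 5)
Your overall strategy is the same as the paper's: take $L=\mathcal{S}(\beta)^\perp$, form $\phi=\phi_L$, prove flatness, and invoke \pref{prop ruled extensions}; and your dimension count correctly yields $\D_\beta+\Gamma=TM$ and $\mathrm{Im}(\beta^{Z_0})=L^\perp$. But there are genuine gaps and errors in the core of the argument. First, the claim $\Gamma\subseteq\D_\phi^r$ is false: for $Z\in\Gamma$ and $X\in TM$ one has $\phi(X,Z)=\alpha(X,Z)_{L^\perp}=\beta(X,Z)$, so $\D_\phi^r\cap TM=\D_\beta$, and $\Gamma\subseteq\D_\phi^r$ would force $\Gamma\subseteq\D_\beta$, hence $\Gamma=\D_\beta\cap\Gamma=\D_g$, contradicting the hypothesis. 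Second, and more seriously, you have misidentified where the difficulty in proving flatness of $\phi$ lies. The part of $R_\phi$ involving only $TM$-arguments reduces (after $TM=\D_\beta+\Gamma$) to $\alpha|_{\Gamma\times\Gamma}$, which is flat by Gauss; and $\alpha(\Gamma,\Gamma)\subseteq\mathcal{S}(\beta)=L^\perp$ is automatic, so there are no ``mixed curvature terms from the $L$-component of $\alpha$'' to cancel. The genuinely new terms are $\phi(Z,\xi)=(\nabla^\perp_Z\xi)_{L^\perp}$ for $Z\in\Gamma$, $\xi\in L$: these involve the normal connection, not the second fundamental form, and no Gauss-type identity or projection of the Gauss equation controls them. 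The paper handles them by (a) a Codazzi computation showing $\langle\phi(Z_1,\xi),\phi(Z_2,Z_3)\rangle$ is symmetric in $Z_1,Z_2$, and (b) diagonalizing $\alpha|_{\Gamma\times\Gamma}$ via Moore's Theorem 2 to get vectors $Z_i$ with $\{\rho_i=\alpha(Z_i,Z_i)\}$ orthonormal spanning $L^\perp$, then using Codazzi again to show $\phi(Z_i,\xi)=\lambda_i(\xi)\rho_i$, which forces the flatness condition on $\Gamma\times L$. None of this appears in your sketch, and it is the heart of the proof. Likewise, the identity $\D_\phi^l=\D_\beta$ (i.e.\ $\phi(\D_\beta,L)=0$) does not follow from dimension tightness; it needs the Codazzi computation $\langle\phi(Y,\xi),\alpha(Z_1,Z_2)\rangle=0$ combined with $\mathcal{S}(\alpha|_{\Gamma\times\Gamma})=L^\perp$.

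Your final bookkeeping is also off: from \eref{suma de dimensiones=suma de dimensiones} and $\D_\beta+\Gamma=TM$ one gets $\dim(\D_\beta)=n-(\mu-\nu_g)=n-(p-\ell)$, not $\nu_g-(p-\ell)$, and $\dim(\D_\phi^l)=\dim(\D_\beta)\neq\nu_g$. The correct route to $\nu_G$ is to apply \lref{nulidad para no simetrica} to the flat $\phi$ with a regular element $Z_0\in\Gamma$, giving $\dim(\D_\phi^r)=(n+\ell)-(p-\ell)=\dim(\D_\phi^l)+\ell$; there is no ``correction term'' to be inserted. Flatness of $N^{n+\ell}$ then follows because $R_\phi\equiv0$ makes $\Gamma_\phi^r=TN$, and $\D_{\hat g}=\Gamma$ because the second fundamental form of $\hat g$ is the $L$-component of $\alpha$ and $\alpha(\Gamma,TM)\subseteq L^\perp$ — these last two points you do state correctly.
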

\begin{proof}
    Let $Z_0\in \Gamma$ be a (right) regular value of $\beta$.
    \lref{nulidad para no simetrica} and \eref{suma de dimensiones=suma de dimensiones} imply that
    $$\dim(\D_\beta+\Gamma)=\dim(\D_\beta)+\dim(\Gamma)-\dim(\D_g)= n-\dim(\text{Im}(\beta^{Z_0}))+\mu-\nu_g\geq n,$$
    which shows that $\alpha(Z_0,TM)=L^{\perp}$ and $\D_\beta+\Gamma=TM$.
    In particular, $\mathcal{S}(\beta)=\mathcal{S}(\alpha|_{\Gamma\times \Gamma})$.

    Let $L:=\mathcal{S}(\beta)^\perp\subseteq T^{\perp}_gM$ and consider the tensor $\phi=\phi_L$ given by \eref{phi en la seccion chern kuiper}.
    We will use \pref{prop ruled extensions} to prove that $g$ is such a composition.
    Hence, we need to show that $\phi$ is flat, $$\dim(\D_{\phi}^r)=\dim(\D_{\phi}^l)+\ell=n+\ell-(p-\ell),$$ 
    and
    \begin{equation}\label{hjhj}
        \D_\beta=\D_\phi^l=\D_\phi^r\cap TM.
    \end{equation}
    
    Notice that $\phi(\D_\beta,TM)=0$ by \eref{alpha(Y,X) in S(beta)perp}, and so $\D_\beta=\D_\phi^r\cap TM$. 
    Moreover, if $Y\in\D_\beta$ then Codazzi equation for $\xi\in L$ and $Z_1,Z_2\in \Gamma$ gives us that
    $$\inner{\phi(Y,\xi)}{\alpha(Z_1,Z_2)}=\inner{\nabla^{\perp}_Y\xi}{\alpha(Z_1,Z_2)}=-\inner{\xi}{(\nabla_Y^{\perp}\alpha)(Z_1,Z_2)}=\inner{\xi}{\alpha(Y,\nabla_{Z_1}Z_2)}=0,\quad \forall Z_1,Z_2\in\Gamma,$$
    since $\Gamma\subseteq TM$ is totally geodesic.
    Hence, $\phi(Y,\xi)=0$  since $\mathcal{S}(\alpha|_{\Gamma\times\Gamma})=\mathcal{S}(\beta)=L^{\perp}$, and so $\D_\beta=\D_\phi^l$. 
    This proves \eref{hjhj}.

    As $TM=\D_\beta+\Gamma$ and \eref{hjhj} holds, the flatness of $\phi$ is equivalent to the flatness of $\phi|_{\Gamma\times(\Gamma\oplus L)}$.
    Notice that $\phi|_{\Gamma\times \Gamma}=\alpha|_{\Gamma\times \Gamma}$ is flat by Gauss equation.
    On the other hand, if $Z_1,Z_2,Z_3\in \Gamma$ and $\xi\in L$ then
    $$\inner{\phi(Z_1,\xi)}{\phi(Z_2,Z_3)}=\inner{\nabla^{\perp}_{Z_1}\xi}{\alpha(Z_2,Z_3)}=-\inner{\xi}{(\nabla^{\perp}_{Z_1}\alpha)(Z_2,Z_3)},$$
    which is symmetric in $Z_1$ and $Z_2$ by Codazzi equation. 
    Hence, to prove the flatness is enough to show that 
    \begin{equation}\label{jhjh}
        \inner{\phi(T_1,\xi_1)}{\phi(t   T_2,\xi_2)}=\inner{\phi(T_1,\xi_2)}{\phi(T_2,\xi_1)},\quad\forall T_1,T_2\in\Gamma,\,\forall\xi_1,\xi_2\in L.
    \end{equation}
    Notice first that the nullity of $\alpha|_{\Gamma\times\Gamma}$ is $\D_\beta\cap\Gamma=\D_g$. 
    Thus, $\alpha|_{\Gamma\times\Gamma}$ is completely described by Theorem 2 of \cite{Moo}. 
    Namely, there are vectors $Z_1,\ldots,Z_{p-\ell}\in \Gamma\cap\D_g^{\perp}$ such that $\alpha(Z_i,Z_j)=0$ for $i\neq j$ and the set
    $\{\rho_i:=\alpha(Z_i,Z_i)\}_{i=1}^{p-\ell}$
    is an orthonormal basis of $L^{\perp}$. 
    Given $\xi\in L$, Codazzi equation implies that
    $$\inner{\phi(Z_i,\xi)}{\rho_j}=-\inner{\xi}{(\nabla^{\perp}_{Z_i}\alpha)(Z_j,Z_j)}=\inner{\xi}{\nabla^{\perp}_{Z_j}(\alpha(Z_i,Z_j))-\alpha(\nabla_{Z_j}Z_i,Z_j)-\alpha(Z_i,\nabla_{Z_j}Z_j)}=0,\quad\forall i\neq j.$$
    Then $\phi(Z_i,\xi)=\lambda_i(\xi)\rho_i$ for some 1-forms $\lambda_i:L\rightarrow \R$. 
    Then \eref{jhjh} holds since $\{Z_1,\ldots,Z_{p-\ell}\}$ is a basis of $\Gamma$ and
    $$\inner{\phi(Z_i,\xi_1)}{\phi(Z_j,\xi_2)}=\delta_{ij}\lambda_i(\xi_1)\lambda_j(\xi_2),\quad\forall i,j,\,\forall \xi_1,\xi_2\in L.$$
    
    Finally, by \lref{nulidad para no simetrica}, we have for $Z_0\in\Gamma$ a regular element of $\beta$ that
    $$\dim(\D_{\phi}^r)=n+\ell-\dim\text{Im}(\phi^{Z_0})=(n+\ell)-(p-\ell)=n-\dim\text{Im}(\beta^{Z_0})+\ell=\dim(\D_\beta)+\ell=\dim(\D_\phi^l)+\ell.$$
    
    The result now follows from \pref{prop ruled extensions}. 
    Notice that the second fundamental form of $\hat{g}$ is the orthogonal projection of $\alpha$ onto $L$, but as $\alpha(\Gamma,TM)\in L^{\perp}$ then $\Gamma=\D_{\hat{g}}$.
\end{proof}


We can describe locally all the submanifolds $\map{g}{M}{n}{\R^{n+2}}$ with $\D_g\neq\Gamma$. 

\begin{prop}\label{ChK mu=nu+1 en codimension 2}
    Let $\map{g}{M}{n}{\R^{n+2}}$ be a submanifold with  $\Gamma\neq\D_g$. 
    Then, on each connected component $U$ of an open dense subset of $M^n$, we have one of the following possibilities:
    \begin{enumerate}[$i)$]
        \item $\mu=\nu_g+1$ and $g|_U=j\circ\hat{g}$ is a composition where $\hat{g}:U\rightarrow V\subseteq\R^{n+1}$ and $j:V\rightarrow\R^{n+2}$ are isometric immersions with $\Gamma=\D_{\hat{g}}$;
        \item $\mu=\nu_g+2$ and $U$ is flat.
    \end{enumerate}
\end{prop}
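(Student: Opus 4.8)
The plan is to split according to the value of $\mu-\nu_g$, which by \pref{teorema desigualdades chern Kuiper} together with the hypothesis $\Gamma\neq\D_g$ must lie in $\{1,2\}$, since here $p=2$. As in the standing convention, I work on a fixed connected component $U$ of an open dense subset where $\mu$, $\nu_g$, and $\dim(\mathcal{S}(\beta))$ are all constant, with $\beta=\alpha|_{TM\times\Gamma}$.

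First I would dispose of the extremal case $\mu=\nu_g+2=\nu_g+p$. Here \pref{proposicion caso extremo ChK es flat} applies verbatim and gives that $U$ is flat, which is exactly possibility $ii)$.

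The substance is the case $\mu=\nu_g+1$, where I want to invoke \tref{teo de composicion para betaD} with $\ell=1$. Its only hypothesis is $\dim(\mathcal{S}(\beta))=\mu-\nu_g$, so the whole point is to establish $\dim(\mathcal{S}(\beta))=1$. The lower bound $\dim(\mathcal{S}(\beta))\geq\mu-\nu_g=1$ holds unconditionally: combining \eref{suma de dimensiones=suma de dimensiones} with \lref{nulidad para no simetrica} at a right regular element yields $n+\nu_g\geq\dim(\D_\beta+\Gamma)+\nu_g=\dim(\D_\beta)+\mu\geq n-\dim(\mathcal{S}(\beta))+\mu$. For the upper bound I would use that the nullity of a Riemannian manifold can never have codimension exactly one: if $\dim\Gamma=n-1$ and $e\perp\Gamma$, then $R(e,\cdot)=0$, because $R$ vanishes on $\Gamma$ and is skew, which forces $e\in\Gamma$, a contradiction. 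Hence $\mu\neq n-1$, so either $\mu\leq n-2$ or $\mu=n$. If $\mu\leq n-2$, then $\nu_g=\mu-1\leq n-3=n-p-1$, and \lref{lemma bound of L} gives $\dim(\mathcal{S}(\beta))\leq p-1=1$, hence $\dim(\mathcal{S}(\beta))=1$. If instead $\mu=n$, then $U$ is flat, $\beta=\alpha$, and $\nu_g=n-1$; choosing a unit $e$ orthogonal to $\D_g=\D_\alpha$ one has $\mathcal{S}(\alpha)=\mathrm{span}\{\alpha(e,e)\}$, so again $\dim(\mathcal{S}(\beta))=1$.

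With $\dim(\mathcal{S}(\beta))=\mu-\nu_g=1$ in hand, \tref{teo de composicion para betaD} (with $p-\ell=1$, i.e.\ $\ell=1$) produces $g|_U=G\circ\hat g$ where $G:N^{n+1}\to\R^{n+2}$ is flat, $\hat g:U\to N^{n+1}$ is an isometric embedding, $\D_{\hat g}=\Gamma$, and $\nu_G=n$. Since $N^{n+1}$ is flat it is locally isometric to an open set $V\subseteq\R^{n+1}$; identifying $N^{n+1}\cong V$ and setting $j:=G$ gives possibility $i)$. The main obstacle is exactly the upper bound $\dim(\mathcal{S}(\beta))\leq1$ in the case $\mu=\nu_g+1$: one must rule out $\dim(\mathcal{S}(\beta))=2$, and the only regime where this could a priori occur is the high–relative–nullity range $\nu_g\geq n-2$ (i.e.\ $\mu\geq n-1$), which is precisely what the codimension-one impossibility for the nullity, together with the explicit flat computation above, eliminates.
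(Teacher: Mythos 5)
Your proof is correct and follows essentially the same route as the paper: dispose of $\mu=\nu_g+2$ via \pref{proposicion caso extremo ChK es flat}, and reduce $\mu=\nu_g+1$ to \tref{teo de composicion para betaD} by checking $\dim(\mathcal{S}(\beta))=1$. You in fact make explicit a step the paper leaves implicit, namely that the only case escaping the hypothesis $\nu_g\leq n-p-1$ is $\mu=n$ (since a nullity of codimension one is impossible by skew-symmetry of $R$), and that the flat case still satisfies the hypothesis of \tref{teo de composicion para betaD}.
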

\begin{proof}[Proof of \tref{ChK mu=nu+1 en codimension 2}]
    By \pref{proposicion caso extremo ChK es flat}, and \tref{teorema de composicion chern-kuiper mu=nu+p-1}, it only remains to analyze the case $\mu=n=\nu_g+1$.
    However, this case is a direct consequence of \tref{teo de composicion para betaD}.
\end{proof}
\begin{remark}
    Each case of \tref{ChK mu=nu+1 en codimension 2} is naturally parametrizable. 
    For $(i)$ we use the Gauss parametrization described in \cite{DGgaussP}, and Corollary 18 of \cite{FFhyperbolen2} describes the second case.
\end{remark}

\subsection{The case \texorpdfstring{$\nu_g=\mu-p+2$}{TEXT}}
In this final subsection, we discuss the next case of Chern-Kuiper's inequalities.
For this, we prove \tref{Thm L=1} which analyzes in generality the case $\ell=1$. 
This result and \tref{teo de composicion para betaD} imply \tref{thm Ch-K nu+p-2=mu}.

\text{ }

\tref{teo de composicion para betaD} describes the submanifolds that attain the first inequality of \eref{eq bound of the rank}. 
We now analyze when the second one does.
Namely, let us consider $\map{g}{M}{n}{\R^{n+p}}$ a submanifold with $\D_g\neq\Gamma$ and suppose that $L:=\mathcal{S}(\beta)^{\perp}$ has rank $\ell=1$. 
As before, consider $\phi=\phi_L$ the tensor given by \eref{phi en la seccion chern kuiper}. 
We begin with the next result.

\begin{lema}\label{lemma L=1, phi(delta-beta,TM+L)=0}
    If $L$ has rank $1$ and
    $\nu_g\leq n-p-1$ then $\D_\beta=\D_\phi^l=\D_\phi^r\cap TM$.
    Furthermore, if $\alpha(\D_\beta,\D_\beta)\neq 0$ then $\Gamma\subseteq\Gamma_\phi^l$.
\end{lema}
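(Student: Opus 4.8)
The plan is to establish that the three subspaces coincide in the order $\D_\beta=\D_\phi^r\cap TM$, then $\D_\beta=\D_\phi^l$, and finally the inclusion $\Gamma\subseteq\Gamma_\phi^l$. Throughout I write $\xi$ for a local unit section of the rank-one bundle $L$, so that $\phi$ records the $\mathcal{S}(\beta)=L^{\perp}\cap T_g^{\perp}M$ component: $\phi(X,Y)=(\alpha(X,Y))_{\mathcal{S}(\beta)}$ for $X,Y\in TM$, while $\phi(X,\xi)=\nabla_X^{\perp}\xi$, the latter already lying in $\mathcal{S}(\beta)$ since $\inner{\nabla_X^{\perp}\xi}{\xi}=0$. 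Two elementary identities will be used repeatedly: $\Gamma\subseteq\ker A_\xi$, because $\inner{A_\xi Z}{W}=\inner{\beta(W,Z)}{\xi}=0$ for $Z\in\Gamma$; and $A_\rho\D_\beta=0$ for every $\rho\in\mathcal{S}(\beta)$, by \eref{alpha(Y,X) in S(beta)perp}.

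First I would prove $\D_\beta=\D_\phi^r\cap TM$. The inclusion $\subseteq$ follows from \eref{alpha(Y,X) in S(beta)perp}, which gives $\alpha(\D_\beta,TM)\subseteq L$ and hence $\phi(TM,\D_\beta)=0$. For the reverse, if $Y\in\D_\phi^r\cap TM$ then taking the first entry in $\Gamma$ yields $0=\phi(Z,Y)=(\alpha(Z,Y))_{\mathcal{S}(\beta)}=\beta(Y,Z)$ for all $Z\in\Gamma$, so $Y\in\D_\beta$. Since $\D_\phi^l\subseteq\D_\phi^r\cap TM$ always holds, and for $Y\in\D_\beta$ one already has $\phi(Y,TM)=0$ by \eref{alpha(Y,X) in S(beta)perp}, the equality $\D_\beta=\D_\phi^l$ reduces to the single assertion $\phi(Y,\xi)=\nabla_Y^{\perp}\xi=0$ for all $Y\in\D_\beta$; equivalently, $L$ is parallel along $\D_\beta$.

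This vanishing is the crux. Since $\nabla_Y^{\perp}\xi\in\mathcal{S}(\beta)$, it suffices to show $\inner{\nabla_Y^{\perp}\xi}{\alpha(X,Z)}=0$ for all $X\in TM$, $Z\in\Gamma$ and $Y\in\D_\beta$, as these $\alpha(X,Z)$ span $\mathcal{S}(\beta)$. Rewriting the pairing as $-\inner{\xi}{\nabla_Y^{\perp}(\alpha(X,Z))}$ and expanding by the total symmetry of $\nabla^{\perp}\alpha$ (Codazzi), together with $\alpha(Y,Z)=0$, $\Gamma\subseteq\ker A_\xi$ and $A_{\mathcal{S}(\beta)}\D_\beta=0$, collapses everything to terms built from the splitting tensor $W\mapsto(\nabla_W Z)$ of the totally geodesic distribution $\Gamma$. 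The main obstacle is to show that these residual splitting-tensor terms cancel. I expect the strict bound $\nu_g\le n-p-1$ to be decisive here: through \lref{nulidad para no simetrica} it gives $\dim\D_\beta\ge n-p+1>\nu_g$, hence $\D_\beta\not\subseteq\Gamma$, so that the codimension-one (rank-one $L$) structure is genuinely used, and a right-regular element $Z_0\in\Gamma$ of $\beta$ should close the cancellation. This is the most delicate point of the argument.

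For the final statement, assume $\alpha(\D_\beta,\D_\beta)\neq0$ and fix $Y_1,Y_2\in\D_\beta$ with $f:=\inner{\alpha(Y_1,Y_2)}{\xi}\neq0$. To obtain $\Gamma\subseteq\Gamma_\phi^l$ I would verify $R_\phi(Z,Y,v,w)=0$ for $Z\in\Gamma$ by splitting $v,w$ into $TM$- and $\xi$-entries. On the $TM\times TM$ block, $\phi(Z,W)=\alpha(Z,W)$ turns $R_\phi(Z,Y,W_1,W_2)$ into $\inner{R(Z,Y)W_1}{W_2}$ by the Gauss equation, which vanishes since $Z\in\Gamma$. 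For the blocks containing $\xi$ I would first extract $\phi(Z,\xi)=\nabla_Z^{\perp}\xi$ explicitly: pairing the Codazzi identity $(\nabla_{Y_1}^{\perp}\alpha)(Y_2,Z)=(\nabla_Z^{\perp}\alpha)(Y_2,Y_1)$ against $\rho\in\mathcal{S}(\beta)$ and using $\alpha(\D_\beta,\,\cdot\,)\subseteq L$ collapses everything to $f\inner{\nabla_Z^{\perp}\xi}{\rho}=-\inner{\alpha(Z,\nabla_{Y_1}Y_2)}{\rho}$, whence $\nabla_Z^{\perp}\xi=-f^{-1}\alpha(Z,\nabla_{Y_1}Y_2)$; the nonvanishing of $f$ is exactly the hypothesis $\alpha(\D_\beta,\D_\beta)\neq0$. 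Substituting this identity, together with $\phi(\D_\beta,\xi)=0$ from the previous step and $\Gamma\subseteq\ker A_\xi$, into the remaining components of $R_\phi(Z,\cdot,\cdot,\cdot)$ and invoking the first Bianchi identity \eref{1 Bianchi identity phi} should make them vanish, giving $\Gamma\subseteq\Gamma_\phi^l$.
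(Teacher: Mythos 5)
Your reduction is correct as far as it goes: $\D_\beta=\D_\phi^r\cap TM$ is immediate from \eref{alpha(Y,X) in S(beta)perp}, and the whole lemma hinges on the single vanishing $\phi(Y,\xi)=(\nabla_Y^{\perp}\xi)_{L^\perp}=0$ for $Y\in\D_\beta$. But that is exactly the step you do not prove. Your pointwise Codazzi expansion of $-\inner{\xi}{\nabla_Y^{\perp}(\alpha(X,Z))}$ terminates at
$\inner{\nabla_Y^{\perp}\xi}{\alpha(X,Z)}=\inner{\xi}{\alpha(Y,\nabla_XZ)}-\inner{\xi}{\alpha(X,\nabla_YZ)}$,
and there is no reason for these two splitting-tensor terms to cancel; the sentence ``I expect the strict bound \ldots to be decisive here'' is an acknowledgment of the gap, not an argument. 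The paper closes this step with a tool you never invoke: the \emph{second Bianchi identity for $R_\phi$} (\lref{Lemma Bianchi phi}). Since every $d_2\in\D_\beta$ lies in $\D_\phi^r$, one has $R_\phi(\cdot,\cdot,d_2,\cdot)\equiv0$, and differentiating this cyclically via \eref{Bianchi phi} with $S=Z\in\Gamma$, $T=d_1\in\D_\beta$, $v=d_2$ produces \eref{triop}, an identity in which $d_2$ reappears only through $\alpha(\cdot,d_2)\in L$ --- this is precisely how information is transported into the $L$-direction. The hypothesis $\nu_g\le n-p-1$ enters through $\dim(\D_\beta\cap\D_g^{\perp})\ge2$, which allows one to pick $d_2\in\D_\beta\cap\D_g^{\perp}$, $d_2\neq0$, with $\alpha(d_1,d_2)=0$ (killing the second term of \eref{triop}) and then $U$ with $\alpha(U,d_2)=\rho$; the surviving term gives $\inner{\phi(Z,w)}{\phi(d_1,\rho)}=0$, and since $\phi(\Gamma,TM)=\mathcal{S}(\beta)=L^{\perp}$ spans the target, $\phi(d_1,\rho)=0$. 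Without this differentiated-curvature input your route stalls, so the central claim of the lemma remains unproved in your proposal.

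Your treatment of the final assertion is essentially sound and close to a genuinely different (and arguably more elementary) argument than the paper's: the Codazzi computation giving $f\inner{\nabla_Z^{\perp}\xi}{\rho}=-\inner{\alpha(Z,\nabla_{Y_1}Y_2)}{\rho}$ is correct, and in fact the same computation, using $\phi(\D_\beta,\xi)=0$, yields $\phi(Y,\xi)=-f^{-1}\phi(Y,\nabla_{Y_1}Y_2)$ for \emph{all} $Y\in TM$, not just $Y\in\Gamma$; substituting this into $R_\phi(Z,Y,\xi,w)$ gives $-f^{-1}R_\phi(Z,Y,\nabla_{Y_1}Y_2,w)$, which vanishes by the Gauss equation. (The first Bianchi identity you invoke is not the right tool here; the extension of the identity to all $Y$ is what closes the argument.) The paper instead reuses \eref{triop} with $\alpha(d_1,d_2)=\rho$. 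Either way, this part rests on $\phi(\D_\beta,L)=0$, so the gap above propagates to it.
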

\begin{proof}
    On the second Bianchi identity \eref{Bianchi phi}, take $S=Z\in\Gamma$, $T=d_1,v=d_2\in\D_\beta$, and $w\in TM$ to obtain
    \begin{equation}\label{triop}
        0=R_\phi(Z,d_1,\alpha(U,d_2),w)+R_\phi(U,Z,\alpha(d_1,d_2),w)\quad\forall Z\in\Gamma,\,\forall d_1,d_2\in\Gamma,\,\forall w\in TM.
    \end{equation}
    In the last equation, fix $d_1$ and choose $0\neq d_2\in\D_\beta\cap\D_g^\perp$ such that $\hat{\alpha}(d_1,d_2)=0$.
    This is possible since $\ell=1$ and 
    $$\dim(\D_\beta\cap\D_g^\perp)=\dim(\D_\beta)-\dim(\D_g)\geq n-(p-1)-(n-p-1)=2,$$
    where the last inequality comes from \lref{nulidad para no simetrica}.     
    Let $\rho\in L$ be a fixed unit generator of $L$ and take $U\in TM$ such that $\rho=\hat{\alpha}(U,d_2)$ to obtain 
    $$0=R_\phi(Z,d_1,\rho,w)=\inner{\phi(Z,w)}{\phi(d_1,\rho)},\quad\forall Z\in\Gamma,\,\forall d_1\in\D_\beta,\,\forall w\in TM,$$
    but $\phi(\Gamma,TM)=\beta(TM,\Gamma)=L^{\perp}$, so $\phi(d_1,\rho)=0$ for any $d_1\in\D_\beta$. 
    Thus
    $$\D_\beta\subseteq\D_\phi^l\subseteq\D_\phi^r\cap TM\subseteq\D_\beta.$$

    Finally, suppose that $\alpha(\D_\beta,\D_\beta)\neq0$. 
    Then $\alpha(\D_\beta,\D_\beta)=L$ by \eref{alpha(Y,X) in S(beta)perp}. 
    Take $d_1,d_2\in\D_\beta$ such that $\alpha(d_1,d_2)=\rho$, and use them in \eref{triop} to obtain
    $$0=R_\phi(Z,U,\rho,w),\quad\forall Z\in\Gamma,\,\forall U,w\in TM,$$
    which proves that $\Gamma\subseteq\Gamma_\phi^l$ since $R_\phi(\Gamma,TM,TM,TM)=0$ by Gauss equation.
\end{proof}
\begin{remark}
    \lref{lemma L=1, phi(delta-beta,TM+L)=0} holds under the weaker assumption of $\dim(\D_\beta\cap\D_g^{\perp})\geq2$ instead of $\nu_g\leq n-p-1$.
\end{remark}

\begin{thm}\label{Thm L=1}
    Let $\map{g}{M}{n}{\R^{n+p}}$ be an isometric immersion with
    $$\mu\neq\mu_g\leq n-p-1.$$
    Suppose that $L:=\mathcal{S}(\alpha|_{TM\times\Gamma})^\perp\subseteq T^{\perp}_gM$ has rank $1$. 
    Then, on each connected component $U$ of an open dense subset of $M^n$ where $\mu$, $\nu_g$, and $k=\dim\alpha(\D_\beta,\D_\beta)$ are constant, we have the following possibilities
    \begin{enumerate}[$i)$]
        \item $k=1$ and $g|_U$ is a composition of a ruled extension $\map{G}{N}{n+1}{\R^{n+p}}$ and an isometric embedding $\hat{g}:U\subseteq M^n\rightarrow N^{n+1}$.
        Moreover, $\D_{\hat{g}}=\Gamma$,
        $$(n+1)-(p-1)\leq\nu_G\leq (n+1)-(\mu-\nu_g),$$
        and $\hat{g}_*(\Gamma)\subseteq\hat{\Gamma}$, where $\hat{\Gamma}\subseteq TN$ is the nullity of $N^{n+1}$ and satisfies that $\dim(\hat{\Gamma})\geq\mu-\nu_g+\nu_G$;
        \item $k=0$ and $g$ is $\D_\beta$-ruled.
        Moreover, the rank of the ruling is at least $(n-p+1)$. 
    \end{enumerate}
\end{thm}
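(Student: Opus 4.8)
The plan is to feed the tensor $\phi=\phi_L$ into \pref{prop ruled extensions}, so that everything reduces to computing $\rank(\Lambda)=\dim(\D_\phi^r)-\dim(\D_\beta)$, which already lies in $\{0,1\}$: indeed $\D_\phi^r\cap TM=\D_\beta$ by \lref{lemma L=1, phi(delta-beta,TM+L)=0} and $\rank(L)=1$, so the projection of $\D_\phi^r$ onto $L$ along $TM$ has image of dimension at most one. First I would pass to $U$ and note that $\alpha(\D_\beta,\D_\beta)\subseteq L$ by \eref{alpha(Y,X) in S(beta)perp}, whence $k=\dim\alpha(\D_\beta,\D_\beta)\in\{0,1\}$. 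The claim organizing the whole argument is that $\rank(\Lambda)=k$, which produces exactly the dichotomy of the statement. Throughout I use the two facts furnished by \lref{lemma L=1, phi(delta-beta,TM+L)=0}, namely $\D_\beta=\D_\phi^l=\D_\phi^r\cap TM$, and $\Gamma\subseteq\Gamma_\phi^l$ whenever $k=1$.

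The case $k=0$ is short. Since $\D_\beta\subseteq\D_\phi^r$ and $\tilde\nabla_{\D_\phi^l}\D_\phi^r\subseteq\D_\phi^r$, for $d_1,d_2\in\D_\beta$ we get $\tilde\nabla_{d_1}d_2\in\D_\phi^r$; but $\alpha(d_1,d_2)=0$ forces $\tilde\nabla_{d_1}d_2=\nabla_{d_1}d_2\in TM$, so $\nabla_{d_1}d_2\in\D_\phi^r\cap TM=\D_\beta$. Thus $\D_\beta$ is totally geodesic and its leaves are mapped by $g$ into affine subspaces, i.e. $g$ is $\D_\beta$-ruled, of rank $\dim\D_\beta\geq n-p+1$ by \lref{nulidad para no simetrica}; in particular $\rank(\Lambda)=0$. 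This settles $(ii)$.

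The heart of the proof is $k=1$, where I must exhibit a right-nullity vector of $\phi$ with nonzero $L$-component. Locally choose $d_1,d_2\in\D_\beta$ with $\alpha(d_1,d_2)=\rho$ a unit generator of $L$, and set $v:=\tilde\nabla_{d_1}d_2=\nabla_{d_1}d_2+\rho\in TM\oplus L$; I claim $v\in\D_\phi^r$. Since $d_1,d_2\in\D_\beta$, the terms $\alpha(\nabla_X d_1,d_2)$ and $\alpha(d_1,\nabla_X d_2)$ lie in $L$, so $\phi(X,\rho)=\big((\nabla_X\alpha)(d_1,d_2)\big)_{\mathcal{S}(\beta)}$. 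Applying Codazzi $(\nabla_X\alpha)(d_1,d_2)=(\nabla_{d_1}\alpha)(X,d_2)$ and expanding, the term $\nabla^\perp_{d_1}\alpha(X,d_2)$ projects to $\langle\alpha(X,d_2),\rho\rangle\,\phi(d_1,\rho)=0$ because $d_1\in\D_\phi^l$, while $\alpha(\nabla_{d_1}X,d_2)\in L$ drops, leaving $\phi(X,\rho)=-\big(\alpha(X,\nabla_{d_1}d_2)\big)_{\mathcal{S}(\beta)}=-\phi(X,\nabla_{d_1}d_2)$. Hence $\phi(X,v)=0$ for every $X\in TM$, so $\rank(\Lambda)=1$. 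This Codazzi identification of the correct extension direction $\tilde\nabla_{d_1}d_2$ is the step I expect to be the main obstacle; once it is in place, the remainder is bookkeeping.

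With $\rank(\Lambda)=1=\rank(L)$ and $\D_\phi^l=\D_\phi^r\cap TM$, \pref{prop ruled extensions} yields the ruled extension $\map{G}{N}{n+1}{\R^{n+p}}$ together with the zero-section embedding $\hat g:U\to N^{n+1}$ satisfying $g|_U=G\circ\hat g$, $\D_G=\D_\phi^r$, and nullity $\hat\Gamma=\Gamma_\phi^r$. Then $\D_{\hat g}=\Gamma$: the inclusion $\Gamma\subseteq\D_{\hat g}$ holds because the second fundamental form of $\hat g$ is $(\alpha)_L$ and $\alpha(\Gamma,TM)\subseteq\mathcal{S}(\beta)=L^\perp$, while $\D_{\hat g}\subseteq\Gamma$ is Gauss's equation for $\hat g$. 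From $\Gamma\subseteq\Gamma_\phi^l\subseteq\Gamma_\phi^r$ I read off $\hat g_*(\Gamma)\subseteq\hat\Gamma$, and since $\Gamma_\phi^r\supseteq\Gamma+\D_\phi^r$ with $\Gamma\cap\D_\phi^r=\Gamma\cap\D_\beta=\D_g$, I obtain $\dim\hat\Gamma\geq\mu+\nu_G-\nu_g$. Finally $\nu_G=\dim\D_\phi^r=\dim\D_\beta+1$, and the bounds $n-p+1\leq\dim\D_\beta\leq n-(\mu-\nu_g)$ — the first from \lref{nulidad para no simetrica}, the second from \eref{suma de dimensiones=suma de dimensiones} together with $\dim(\D_\beta+\Gamma)\leq n$ — give $(n+1)-(p-1)\leq\nu_G\leq(n+1)-(\mu-\nu_g)$, completing case $(i)$.
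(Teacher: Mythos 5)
Your proof is correct and follows the paper's overall skeleton --- reduce everything to \lref{lemma L=1, phi(delta-beta,TM+L)=0} and \pref{prop ruled extensions} --- but the decisive step in case $k=1$ is handled by a genuinely different argument. The paper obtains $\dim(\D_\phi^r)=\dim(\D_\beta)+1$ indirectly: using $\Gamma\subseteq\Gamma_\phi^l$ it identifies $\D_\phi^r$ with the left nullity of the flat extension $\hat{\beta}:(TM\oplus L)\times\Gamma\rightarrow L^{\perp}$ of $\beta$, applies \lref{nulidad para no simetrica} to bound the jump by one, and excludes a jump of zero because $\D_\phi^l=\D_\phi^r$ would make $g$ a $\D_\beta$-ruled submanifold, contradicting $k=1$. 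You instead exhibit the extra nullity direction explicitly: for $d_1,d_2\in\D_\beta$ with $\alpha(d_1,d_2)=\rho$, the vector $\tilde{\nabla}_{d_1}d_2=\nabla_{d_1}d_2+\rho$ lies in $\D_\phi^r$ and has nonzero $L$-component, which pins down $\rank(\Lambda)=1$ at once. Your Codazzi computation verifying this is correct, but it is a special case of the relation $\tilde{\nabla}_{\D_\phi^l}\D_\phi^r\subseteq\D_\phi^r$ already recorded in \sref{section revisiting ruled extensions} --- which you yourself invoke in the $k=0$ case --- so the step could simply be cited. Your route is more elementary (it needs $\Gamma\subseteq\Gamma_\phi^l$ only for $\hat{g}_*(\Gamma)\subseteq\hat{\Gamma}$ and the bound on $\dim(\hat{\Gamma})$, not to produce the extension) and makes the geometric meaning of $\Lambda$ transparent; the paper's route buys the interpretation of $\D_\phi^r$ as the nullity of a flat bilinear form, which is what Moore's lemma naturally measures. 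Two small caveats: the claim $\rank(\Lambda)=0$ when $k=0$ is asserted but not justified (it is also not needed, since conclusion $(ii)$ only requires the ruling); and the one-line claim that $\D_{\hat{g}}\subseteq\Gamma$ is ``Gauss's equation for $\hat{g}$'' is too quick, since $N^{n+1}$ need not be flat in case $(i)$ --- although the paper's own proof does not address that inclusion either.
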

\begin{proof}
    We want to use \pref{prop ruled extensions} to prove this result. 
    By \lref{lemma L=1, phi(delta-beta,TM+L)=0} we know that $\D_\beta=\D_\phi^l=\D_\phi^r\cap TM$.
    
    Suppose first that $k=1$, and so $\Gamma\subseteq\Gamma_\phi^l$ by \lref{lemma L=1, phi(delta-beta,TM+L)=0}.
    This implies that the tensor $\hat{\beta}:(TM\oplus L)\times\Gamma\rightarrow L^{\perp}$ given by $\hat{\beta}(v,Z)=\phi(Z,v)$ is a flat extension of $\beta$.
    Notice that the left nullity of $\hat{\beta}$ coincides with $\D_\phi^r$. 
    Indeed, let us verify the non-trivial contention.
    Take $v_0\in TM\oplus L$ such that $\hat{\beta}(\Gamma,v_0)=0$.
    Then, as $\Gamma\subseteq\Gamma_\phi^l$, we have that
    $$0=R_\phi(Z,X,v_0,w)=\inner{\phi(X,v_0)}{\phi(Z,w)},\quad\forall Z\in\Gamma,\,\forall X,w\in TM,$$
    but $\phi(\Gamma,TM)=L^{\perp}$, and so $v_0\in\D_\phi^r$.
    In particular, \lref{nulidad para no simetrica} for $\beta$ and $\hat{\beta}$ shows that the dimensions of $\D_\phi^r$ and $\D_\phi^l=\D_\beta$ differ by at most $1$. 
    However, if $\D_\beta=\D_\phi^l=\D_\phi^r$ then $g$ would be $\D_\beta$-ruled which is absurd since $k=1$.
    \pref{prop ruled extensions} shows that $g$ has a ruled extension $\map{G}{N}{n+1}{\R^{n+p}}$. 
    Moreover, \lref{nulidad para no simetrica} implies that the nullity of $G$ satisfies that
    $$\nu_G=\dim(\D_\phi^r)=\dim(\D_\beta)+1\geq n+1-(p-1).$$
    On the other hand, using \eref{suma de dimensiones=suma de dimensiones} we get that 
    $$\nu_G=\dim(\D_\beta)+1=\dim(\D_\beta+\Gamma)+\nu_g+1-\mu\leq n+1-(\mu-\nu_g).$$
    The bound on the nullity of $N^{n+1}$ follows from \eref{suma de dimensiones=suma de dimensiones} and $\Gamma\subseteq\Gamma_\phi^l \subseteq\Gamma_\phi^r$ since
    $$\dim(\hat{\Gamma})=\dim(\Gamma_\phi^r)\geq\dim(\D_\phi^r+\Gamma)=1+\dim(\D_\beta+\Gamma)=1+\dim(\D_\beta)+\mu-\nu_g=\nu_G+\mu-\nu_g.$$

    Finally, suppose that $k=0$, that is, $\alpha(\D_\beta,\D_\beta)=0$.
    Codazzi equation \eref{phi en la seccion chern kuiper} for $d_1,d_2\in\D_\beta$ give us that
    $$0=(\overline{\nabla}_X\phi)(d_1,d_2)-(\overline{\nabla}_{d_1}\phi)(X,d_2)=\phi(X,\nabla_{d_1}d_2),\quad\forall X\in TM,\,\forall d_1,d_2\in\D_\beta,$$
    which proves that $\D_\beta$ is totally geodesic since $\D_{\phi}^r\cap TM=\D_\beta$.
    Hence, $g$ is $\D_\beta$-ruled, and
    \lref{nulidad para no simetrica} gives the desired bound on the rank of the rulings.
\end{proof}

We prove now \tref{thm Ch-K nu+p-2=mu}.

\begin{proof}[Proof of Theorem \ref{thm Ch-K nu+p-2=mu}]
    By \lref{lemma bound of L} we know that $\ell\in\{1,2\}$.
    The case  $\ell=2$ follows from \tref{teo de composicion para betaD}.
    Assume now that $\ell=1$, so we can apply \tref{Thm L=1}.
    Then, if $k=1$, $N^{n+1}$ must be flat since 
    $$\dim(\hat{\Gamma})\geq p-2+\nu_G\geq p-2+(n+1)-(p-1)=n,$$
    but $\dim(\hat{\Gamma})=n$ is not possible by the symmetries of the curvature tensor, so $\hat{\Gamma}=TN$.
    It remains to exclude the second possibility of that result, that is, $k=0$. 
    Suppose, by contradiction, that $\ell=1$ and $g$ is $\D_g$-ruled on an open subset of $M^n$.
    Notice that as $\phi|_{TM\times\Gamma}=\beta$ is flat and $\D_\beta=\D_\phi^r\cap TM$, so $\phi|_{TM\times (\D_\beta+\Gamma)}$ is flat.
    However, by \eref{suma de dimensiones=suma de dimensiones} and \lref{nulidad para no simetrica} we know that
    $$\dim(\D_\beta+\Gamma)=\dim(\D_\beta)+\mu-\nu_g\geq n-(p-1)+p-2=n-1,$$
    so $\phi|_{TM\times TM}$ must be flat.
    Then, fixing a unit generator $\rho$ of $L$, the shape operator $A=A_\rho$ satisfies Gauss equation.
    However, as $g$ is $\D_\beta$-ruled and $A\Gamma=0$, we have that 
    $$\inner{A(\D_\beta+\Gamma)}{\D_\beta+\Gamma}=0,$$
    which implies that $\mu=\ker(A)\geq n-2$.
    This is a contradiction since $\mu=\nu_g+p-2\leq (n-p-1)+(p-2)= n-3$.
\end{proof}

\section{Final comments}\label{final section}
In this final section, we give some observations of this work.

\text{ }

The results of this work suggest that there are at least two distinct families of submanifolds $\map{g}{M}{n}{\R^{n+p}}$ with $\nu_g\neq\mu$. 
The submanifolds with rank greater than their codimension and those that do not.
Moreover, Theorems \ref{teo de composicion para betaD} and \ref{Thm L=1} suggest that, aside from the ruled cases, any submanifold of the first class is contained in a submanifold of the second one.

In the submanifold theory, there are many works in which it is necessary to exclude compositions of the form $g=G\circ\hat{g}$ where $\hat{g}:N^{n+\ell}\rightarrow\R^{n+p}$ is a flat submanifold; see \cite{DFThyperEn2} and \cite{DTcompii} for example.
Moreover, the notion of {\it honest deformation} is to exclude this type of behavior in the deformation theory; see \cite{FFhyperbolen2}. 
This concept seems to be related to our work, but we did not deal with deformations.
For this reason, it may be more appropriate a notion of honesty that depends only on the submanifold itself. 

\printbibliography

@article{DFTinter,
author = {Dajczer, M. and Florit, L. and Tojeiro, R.},
year = {1998},
%month = {01},
pages = {361-390},
title = {\textit{On deformable hypersurfaces in space forms}},
volume = {\textbf{174}},
journal = {Ann. Mat. Pura Appl.},
%doi = {10.1007/BF01759378}
}

@article{DFThyperEn2,
author = {Dajczer, M. and Florit, L. and Tojeiro, R.},
year = {2013},
%month = {10},
pages = {621-643},
title = {\textit{Euclidean hypersurfaces with genuine deformations in codimension two}},
volume = {\textbf{140}},
journal = {Manuscripta Math.},
%doi = {10.1007/s00229-012-0556-z}
}

@article{Moo,
author = {Moore, J.},
year = {1977},
%month = {06},
pages = {449–484},
title = {\textit{Submanifolds of constant positive curvature I}},
volume = {\textbf{44}},
journal = {Duke Math. J.},
%doi = {10.1215/S0012-7094-77-04421-0}
}

@article{YoSC,
author = {Guajardo, D.},
year = {2022},
%month = {06},
pages = {},
title = {\textit{Genuine deformations of Euclidean hypersurfacesin higher codimensions I}},
volume = {},
journal = {Arxiv},
%doi = {10.1215/S0012-7094-77-04421-0}
}

@article{FGsingular,
author = {Florit, L. and Guimarães, F.},
year = {2020},
%month = {03},
pages = {279–299},
title = {\textit{Singular genuine rigidity}},
volume = {\textbf{95}},
journal = {Comment. Math. Helv.},
%doi = {10.4171/CMH/488}
}

@article{DJgbendings,
  title={Genuine infinitesimal bendings of submanifolds},
  author={Dajczer, M. and Jimenez, M. I.},
  journal={preprint},
  year={2019}
}

@article{DTcompii,
author = {Dajczer, M. and Tojeiro, R.},
year = {1992},
%month = {09},
pages = {},
title = {\textit{On compositions of isometric immersions}},
volume = {\textbf{36}},
journal = {J. Diff. Geometry},
%doi = {10.4310/jdg/1214448440}
}

@article{DFgenrigcodim2,
author = {Dajczer, M. and Florit, L.},
year = {2004},
%month = {01},
pages = {195-210},
title = {\textit{Genuine Rigidity of Euclidean Submanifolds in Codimension Two}},
volume = {\textbf{106}},
journal = {Geom. Dedicata},
%doi = {10.1023/B:GEOM.0000033846.63094.46}
}

@article{FFhyperbolen2,
author = {Florit, L. and Freitas, G.},
year = {2017},
%month = {03},
pages = {751–797},
title = {\textit{Classification of codimension two deformations of rank two Riemannian manifolds}},
volume = {\textbf{25}},
journal = {Comm. Anal. Geom.}
%doi = {10.4310/CAG.2017.v25.n4.a2}
}

@article{YoFlatE,
author = {Guajardo, D.},
title = {\textit{Flat Euclidean submanifolds in high codimension}},
pages = {In preparation},
}

@article{DFGenDefSub,
author = {Dajczer, M. and Florit, L.},
year = {2004},
%month = {12},
pages = {1105–1129},
title = {\textit{Genuine Deformations of Submanifolds}},
volume = {\textbf{12}},
journal = {Comm. Anal. Geom.},
%doi = {10.4310/CAG.2004.v12.n5.a6}
}

@article {ChKineq,
AUTHOR = {Chern, S. and Kuiper, N. H.},
TITLE = {\textit{Some theorems on the isometric imbedding of compact Riemann
              manifolds in euclidean space}},
JOURNAL = {Ann. of Math.},
  %FJOURNAL = {Annals of Mathematics. Second Series},
    VOLUME = {\textbf{56}},
      YEAR = {1952},
     PAGES = {422-430},
      %ISSN = {0003-486X},
   %MRCLASS = {53.0X},
  %MRNUMBER = {50962},
%MRREVIEWER = {C. B. Allendoerfer},
       %DOI = {10.2307/1969650},
       %URL = {https://doi.org/10.2307/1969650},
}

@article{DGgaussP,
author = {Dajczer, M. and Gromoll, D.},
year = {1985},
%month = {01},
pages = {1–12},
title = {\textit{Gauss parametrizations and rigidity aspects of submanifolds}},
volume = {\textbf{22}},
journal = {J. Differential Geom.},
%doi = {10.4310/jdg/1214439717}
}

@article{DFaust,
author= {Dajczer, M. and Florit, L.},
title = {\textit{A class of austere submanifolds}},
year = {2001},
journal = {Illinois J. Math. },
volume = {\textbf{45}},
pages = {735-755},
}

@article{FZnegcurv2,
author = {Florit, L. and Zheng, F.},
year = {1999},
%month = {03},
pages = {1-15},
title = {\textit{On nonpositively curved euclidean submanifolds: splitting results II}},
volume = {\textbf{508}},
journal = {J. Reine Angew. Math.},
%doi = {10.1006/jmaa.2001.7810}
}

@article{FZnegcurv1,
author = {Florit, L. and Zheng, F.},
year = {1999},
%month = {03},
pages = {55-62},
title = {\textit{On nonpositively curved Euclidean submanifolds: splitting results}},
volume = {\textbf{74}},
journal = {Comment. Math. Helv.},
%doi = {10.1006/jmaa.2001.7810}
}

IMPA – Estrada Dona Castorina, 110

22460-320, Rio de Janeiro, Brazil

{\it E-mail address}: {\tt diego.navarro.g@ug.uchile.cl}
\end{document}